\frenchspacing \setlength{\oddsidemargin}{0pt}
\def\q{\qquad}
\def\SP{\vskip 5pt}
\newcommand\ds{\displaystyle}
\newtheorem{thm}{Theorem}[section]
\newtheorem{prop}[thm]{Proposition}
\newtheorem{lem}[thm]{Lemma}
\newtheorem{rem}[thm]{Remark}
\newtheorem{defn}[thm]{Definition}
\def\Apio{\mathcal{A}_p(\Omega)}
\def\ApDio{\mathcal{A}_{p,D}(\Omega)}
\def\mb{\it}    
\def\mbC{{\mathbb C}}
\def\mbf{{\it f}}      
\def\mbg{{\it g}}
\def\mbR{{\mathbb R}}
\def\mbbR{{\mathbb R}}
\def\mbbO{\mathbb{H}^{1,p}(\Omega)}
\def\mbbH{\mathbb{H}_{0}^{1,p}(\mbR^3)}
\def\mbbHn{\mathbb{H}^{1,p}(\mbR^3)}
\def\mbbHO{\mathbb{H}_{0}^{1,p}(\Omega)}
\def\mbg{{\mb g}}
\def\mbP{{\it \phi}}      
\newcommand\Con{C_0^{\infty}({\mathbb R}^3)}
\def\Cono{C_0^\infty(\Omega)}
\def\cF{{\mathcal F}}
\def\coF{\overline{\mathcal F}}
\def\cS{{\mathcal S}}
\def\ep{\epsilon}
\def\<{\langle}
\def\>{\rangle}
\def\pp{\hbox{\sl p}}
\def\pa{\partial}
\def\WpO{\mathbb{W}^{1,p}(\Omega)}
\def\<{\langle}
\def\>{\rangle}
\numberwithin{equation}{section}
\begin{document}

\baselineskip=1.3\baselineskip

\title[Dirac--Sobolev spaces]{Dirac--Sobolev spaces and Sobolev spaces}

\author{Takashi Ichinose and Yoshimi Sait\={o} \\}

\address{Department of Mathematics, Kanazawa University, Kanazawa, 920-1192, Japan}
\email{ichinose@kenroku.kanazawa-u.ac.jp}
\address{Department of Mathematics,
     University of Alabama at Birmingham, Birmingham, AL 35294, USA}
\email{saito@math.uab.edu}



\begin{abstract}
        The aim of this work is to study the first order Dirac-Sobolev spaces
     in $L^p$ norm on an open subset of ${\mathbb R}^3$ to clarify its relationship
     with the corresponding Sobolev spaces. It is shown that for $1< p <\infty$,
     they coincide, while for $p=1$, the latter spaces are proper subspaces of the former.
\end{abstract}

\maketitle

{\it Keywords}: Sobolev spaces, Dirac oprerator.

{\it 2000 Mathematics Subject Classification}: 46E35, 46E40.

\vskip 10pt

\section{\bf Introduction}                                

\SP

        In the recent work \cite{BES},  Balinski-Evans-Sait\=o introduced
     an $L^p$-seminorm $\|(\alpha\cdot \pp){\mb f}\|_{p,\Omega}$ of
     a ${\mbC}^4$-valued function $f$ in an open subset $\Omega$ of
     ${\mbR}^3$, relevant to  a massless  Dirac operator
\begin{equation}\label{1dirac}                                  
       \alpha \cdot \pp =
          \sum_{j=1}^3 \alpha_j(-i\partial_j) \q (\partial_j
                                              = \partial/\partial{x_j}),
\end{equation}
     where $\pp = -i\nabla$, and
     $\alpha= (\alpha_1, \, \alpha_2, \, \alpha_3)$ is the triple of
     $4 \times 4$ Dirac matrices
\begin{equation}\label{1alpha}                                    
    \alpha_j =
  \begin{pmatrix}
     0_2  &\sigma_j \\ \sigma_j &   0_2
  \end{pmatrix}  \qquad  (j = 1, \, 2, \, 3)
\end{equation}
     with  the $2\times 2$ zero matrix $0_2$ and the triple of  $2 \times 2$
     Pauli matrices
$$
    \sigma_1 =
\begin{pmatrix}
       0&1 \\ 1& 0
\end{pmatrix}, \,\,\,
    \sigma_2 =
\begin{pmatrix}
       0& -i  \\ i&0
\end{pmatrix}, \,\,\,
    \sigma_3 =
\begin{pmatrix}
       1&0 \\ 0&-1
\end{pmatrix}.
$$
     They used this seminorm to give a group
     of inequalities called {\it Dirac--Sobolev inequalities}
     in order to obtain $L^p$-estimates of the {\it zero modes}, i.e. eigenfunctions
     associated with the eigenvalue $\lambda = 0$,
     of the Dirac operator $(\alpha\cdot {\pp})+Q$, where $Q(x)$ is a $4\times 4$
     Hermitian matrix-valued potential decaying at infinity.
     We believe that our above notation ``$\pp\,$" for the differential
     operator $-i\nabla$ will not be confused with another ``$p$" which appears
     as the superscript $1\leq p <\infty$ of the space $L^p$.

\par
        Let
     $\Omega$ be an open subset of ${\mbR}^3$ and let the first order Dirac--Sobolev space
     $ \mbbHO$, $1\le p <\infty,$ be the
     completion of $[C_0^{\infty}(\Omega)]^4$ with respect to the norm
\begin{equation}\label{1H1pOmega}                                   
    \|{\mb f}\|_{D,1,p,\Omega} :
        = \left \{ \int_{\Omega} (|{\mb f}(x)|_p^p +
          |(\alpha \cdot \pp){\mb f}(x)|_p^p \,)\,  dx \right \}^{1/p}
        = \big(\|{\mb f}\|_{p,\Omega}^p
               +\|(\alpha \cdot \pp){\mb f}\|_{p,\Omega}^p\big)^{1/p} ,
\end{equation}
 where ${\mbf}(x) = {}^t(f_1(x), f_2(x), f_3(x), f_4(x))$, the norm of a vector
${\mb a} = {}^t(a_1, a_2, a_3, a_4) \in {\mathbb C}^4$ being denoted by
\begin{equation} \label{1gpnorm}                                      
      | {\mb a} |_p = \Big[\sum_{k=1}^4 |a_k|^p\, \Big]^{1/p}. \hskip 70pt
\end{equation}

        As one of the simplest Dirac--Sobolev inequalities (\cite{BES}, Corollary 2), they
     showed: If $\Omega$ is a bounded open
     subset of ${\mbbR}^3$ and ${\mb f} \in \mathbb{H}_{0}^{1,p}(\Omega)$
     with $1 \le p < \infty$, then for $1 \le k < p(p+3)/3$
     there exists a positive constant $C$ such that
\begin{equation}\label{1DS}                                         
         \|{\mb f}\|_{k,\Omega}
                  \le C \|(\alpha \cdot \pp){\mb f}\|_{p, \Omega}\,,
\end{equation}
     where  $\| {\mb g} \|_{p,\Omega}$ stands for the norm of
     ${\mb g} = {}^t(g_1, g_2, g_3, g_4) \in [L^p(\Omega)]^4$ given by
\begin{equation}\label{1pOnorm}                                     
     \|{\mb g}\|_{p,\Omega} =
          \left \{ \int_{\Omega} |{\mb g}(x)|_p^p\, dx \right \}^{1/p}
           = \left\{ \int_{\Omega}\, \sum_{j=1}^4 |g_j(x)|^p \,dx \right\}^{1/p}.
\end{equation}

       Now let $\beta$ be the fourth Dirac matrix given by
\begin{equation}\label{1betam}                                         
       \beta =
  \begin{pmatrix}
     {1}_2 & 0_2 \\ 0_2 &   -{1}_2
  \end{pmatrix},
\end{equation}
     where $1_2$ is the $2 \times 2$ unit matrix. It has been known that
     the free massless Dirac operator $\alpha\cdot \pp$ or the free Dirac
     operator $\alpha\cdot \pp + m\beta$ with positive mass $m$ and
     the relativistic Schr\"odinger operator $\sqrt{m^2 - \Delta}$ may bring similar
     properties in $L^2$ sometimes but not necessarily in $L^p$ with $p\not=2$.
     On the other hand, the following two norms are equivalent: for $1<p<\infty$,
$$
   \begin{cases}
      &(\|\psi\|_p^p + \|\nabla \psi\|_p^p)^{1/p}, \\
      &\|\sqrt{1 - \Delta}\,\psi\|_p.
   \end{cases}
$$
     where $\psi$ is a scalar-valued function in ${\mbbR}^3$ (Stein \cite{Stein},
     p.135, Theorem 3 or p.136, Lemma 3). However, for $p = 1$ or $p = \infty$, these
     two norms are not equivalent, in fact, the one does not dominate the other
     (\cite{Stein}, p.160, 6.6).

        For  an open subset $\Omega$ of $\mbbR^3$ and $1 \le p < \infty$, let
     $\Apio$ be all $C^{\infty}$ functions $\psi$ on $\Omega$ such that $\psi$ and
     $\nabla\psi$ belong to $L^p(\Omega)$ and let $H^{1,p}(\Omega)$ be the completion
     of $\Apio$ with respect to the norm given by
\begin{equation}\label{1sob}                                           
         \|\psi\|_{1,p,\Omega}
          = \left\{ \int_{\Omega} (|{\psi}(x)|^p +
             |\nabla{\psi}(x)|^p\,) \, dx \right\}^{1/p}
               = \{ \|\psi\|_{p,\Omega}^p + \|\nabla{\psi}\|_{p,\Omega}^p \}^{1/p},
\end{equation}
     where $|\nabla\psi(x)|^p = \sum_{j=1}^3 |\partial_j\psi(x)|^p$. Let $\Cono$ be
     the space of all $C^{\infty}$ functions $\phi$ on $\Omega$ such
     that support of $\phi$ is contained in $\Omega$ and let $H_0^{1,p}(\Omega)$ be the
     completion of $\Cono$ with respect to the norm (\ref{1sob}). The space
     $H_0^{1,p}(\Omega)$ is a closed subspace of $H^{1,p}(\Omega)$. The norm
     $\| \cdot \|_{S,1,p,\Omega}$ of the Banach space $[H^{1,p}(\Omega)]^4$ (and
     $[H_0^{1,p}(\Omega)]^4$) is given by
\begin{equation}\label{1SOB}                                           
       \|{\mbf}\|_{S,1,p,\Omega} =
          \left\{\int_{\Omega} \big(|{\mbf}(x)|_p^p
                 +|\nabla{\mbf}(x)|_p^p \,\big)\, dx\right\}^{1/p},
\end{equation}
     where ${\mbf} = {}^t(f_1, f_2, f_3, f_4) \in [H^{1,p}(\Omega)]^4$
     and
\begin{equation}\label{1Sobnorm}                                      
    \begin{cases}
      \ds   |{\mbf}(x)|_p^p =\sum_{k=1}^4 |f_k(x)|^p, \\
       \ds  |\nabla{\mbf}(x)|_p^p
                   = \sum_{j=1}^3 |\pa_j{\mbf}(x)|_p^p
                   = \sum_{j=1}^3 \sum_{k=1}^4 |\pa_j\mbf_k(x)|^p \\
   \end{cases}
\end{equation}
     (cf. (\ref{1gpnorm})).

\begin{defn}                                                   
        {\rm Let $\ApDio$ be all $[C^{\infty}(\Omega)]^4$ functions $f$ on $\Omega$ such that
     $f$ and $(\alpha \cdot \pp)f$ belong to $[L^p(\Omega)]^4$. Then the
     Dirac-Sobolev spaces $\mbbO$ and $\mbbHO$ are the completion of $\ApDio$ and
     $[\Cono]^4$ with respect to the norm
\begin{equation}\label{1S1p}                                        
       \|{\mbf}\|_{D,1,p,\Omega}
          = \left\{ \int_{\Omega} (|{\mbf}(x)|_p^p
                +|(\alpha \cdot \pp){\mbf}(x)|_p^p\,) \, dx \right\}^{1/p},
\end{equation}
     respectively, where ${\mbf}(x) = {}^t(f_1(x), f_2(x), f_3(x), f_4(x))$,
     and }
\begin{equation}\label{1nablaf}                                      
   \begin{cases}
      \ds    |{\mbf}(x)|_p^p = \sum_{k=1}^4 |f_k(x)|^p\,, \\
      \ds    |(\alpha \cdot \pp){\mbf}(x)|_p^p
                = |\sum_{j=1}^3 \alpha_j\pp_j{\mbf}(x)|_p^p
                = \sum_{k=1}^4\Big|\Big(\sum_{j=1}^3 -i\alpha_j\partial_j {f}\Big)_k(x)\Big|_p^p\,
   \end{cases}
\end{equation}
\end{defn}

\SP

        It should be noted that in the paper \cite{BES} the space ${\mathbb H}_0^{1,1}(\Omega)$
     of our paper was denoted by ${\mathbb H}^{1,1}(\Omega)$ without subscript `0'.
     We have adopted this notation, following the usual Sobolev space convention.

\SP

\begin{rem}                                                            
        {\rm (i) As in the case of Sobolev spaces, we have $\mbbHn = \mbbH$
     since $[\Con]^4$ is dense in $\mathcal{A}_{p,D}(\mbR^3)$ with respect to the norm (\ref{1S1p}).

\SP

        (ii) Let $\WpO$ be defined by
\begin{equation*}
           \WpO = \{ f \in [L^p(\Omega)]^4 \,;\, (\alpha\cdot \pp)f \in [L^p(\Omega)]^4 \},
\end{equation*}
     where $(\alpha\cdot \pp)f$ is taken in the sense of distributions. As in the case of
     Sobolev spaces (see, e.g., Adams-Fournier\,\cite{AF}, Theorem 3.17), by approximating
     elements in $\WpO$ using the molifier, we have  $\WpO = \mbbO$, where
     $1 \le p < \infty$ and $\Omega$ is an open subset of
     $\mbR^3$ . }
\end{rem}

        In this work we are going to investigate the relationship of the
     Dirac--Sobolev spaces $\mbbO$ and the ordinary Sobolev
     spaces $[H^{1,p}({\Omega})]^4$ as well as the relationship of the
     Dirac--Sobolev spaces $\mbbHO$ and the ordinary Sobolev
     spaces $[H_0^{1,p}({\Omega})]^4$.

        To proceed, we note the inclusions $[H^{1,p}(\Omega)]^4 \subset {\mathbb H}^{1,p}(\Omega)$
     and $[H_0^{1,p}(\Omega)]^4 \subset {\mathbb H}_0^{1,p}(\Omega)$ to hold,
     which we shall see precisely later in the next section, Proposition 2.2. So for an open
     subset $\Omega$ of ${\mathbb R}^3$, define the linear map $J_{\Omega}$ and $J_{0,\Omega}$ by
\begin{equation}\label{1J}                                 
   \begin{cases}
       J_{\Omega} \,:\, [H^{1,p}(\Omega)]^4 \ni \mbf
                      \mapsto J_{\Omega}f = f \in \mbbO, \\
       J_{0,\Omega} \,:\, [H_0^{1,p}(\Omega)]^4 \ni \mbf
                      \mapsto J_{0,\Omega}f = f \in \mbbHO.
   \end{cases}
\end{equation}

        Our main result is as follows:
\begin{thm}                                                        
        Let $\Omega$ be an open subset of $\mbbR^3$ and let $J_{\Omega}$ and $J_{0,\Omega}$
     be as above.

\SP

        {\rm (i)} Then, for $1 \le p < \infty$, the map both $J_{\Omega}$ and
     $J_{0,\Omega}$ are one-to-one and continuous. The Sobolev space $[H_0^{1,p}(\Omega)]^4$
     is a dense subspace of the Dirac--Sobolev spaces $\mbbHO$.

\SP

        {\rm (ii)} Let $1 < p < \infty$. Then $[H_0^{1,p}(\Omega)]^4 = \mbbHO$,
  i.e., the map $J_{0,\Omega}$ is not
     only one-to-one and continuous, but also they are onto with continuous
inverse map    $J_{0,\Omega}^{-1}$.

\SP

        {\rm (iii)} For $p = 1$ the map neither $J_{\Omega}$ nor $J_{0,\Omega}$ are onto, or
     we have $[H^{1,1}(\Omega)]^4 \subsetneqq \mathbb{H}^{1,1}(\Omega)$ and
     $[H_0^{1,1}(\Omega)]^4 \subsetneqq \mathbb{H}_{0}^{1,1}(\Omega)$. For $p = 1$, the norms
     $\| \cdot \|_{D,1,1,\Omega}$ and $\| \cdot \|_{S,1,1,\Omega}$ of these two spaces are
     not equivalent; $\| \cdot \|_{D,1,1,\Omega}$ is dominated by $\| \cdot \|_{S,1,1,\Omega}$,
     but not conversely.
\end{thm}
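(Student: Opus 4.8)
The plan is to reduce the whole statement to a comparison of the two pointwise symbols, that of $\nabla$ and that of $\alpha\cdot\pp$, and then to feed this into the $L^p$ boundedness of Calderón--Zygmund operators for $1<p<\infty$ and into its failure at $p=1$. The algebraic engine is the Clifford relation $\alpha_j\alpha_k+\alpha_k\alpha_j=2\delta_{jk}1_4$, which gives $(\alpha\cdot\pp)^2=-\Delta\,1_4$ and, on the Fourier side, $(\alpha\cdot\xi)^2=|\xi|^2 1_4$. For part (i), continuity of $J_\Omega$ and $J_{0,\Omega}$ is the elementary inequality $\|\cdot\|_{D,1,p,\Omega}\le C\,\|\cdot\|_{S,1,p,\Omega}$ on $[\Cono]^4$, valid for every $1\le p<\infty$: each entry of $(\alpha\cdot\pp)f$ is of the form $\pm i\,\partial_j f_k$, so pointwise $|(\alpha\cdot\pp)f(x)|_p\le c\,|\nabla f(x)|_p$, and integrating gives the norm bound. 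Consequently a sequence Cauchy in the Sobolev norm is Cauchy in the Dirac norm, so the maps are well defined and bounded. Injectivity holds because both completions are realized inside $[L^p(\Omega)]^4$ through their function part --- for $\mbbO$ this is the identification $\WpO=\mbbO$ recorded above --- and $J$ leaves that function part unchanged, so its kernel is $0$. Density of $[H_0^{1,p}(\Omega)]^4$ in $\mbbHO$ is then immediate, since the image under $J_{0,\Omega}$ already contains $[\Cono]^4$, which is dense in $\mbbHO$ by the very definition of the completion.

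For part (ii) the one new ingredient is the reverse bound $\|\nabla f\|_{p,\Omega}\le C\,\|(\alpha\cdot\pp)f\|_{p,\Omega}$ for $f\in[\Cono]^4$ and $1<p<\infty$. First I would extend $f$ by zero to $[\Con]^4$; since $\nabla$ and $\alpha\cdot\pp$ are local differential operators, the $L^p$ norms over $\Omega$ equal those over $\mbR^3$, so it suffices to work on $\mbR^3$. There, taking Fourier transforms and using $(\alpha\cdot\xi)^{-1}=(\alpha\cdot\xi)/|\xi|^2$ for $\xi\neq0$, one gets
\[
   \widehat{\pp_j f}(\xi)=\xi_j\,\hat f(\xi)
      =\sum_{k=1}^{3}\alpha_k\,\frac{\xi_j\xi_k}{|\xi|^{2}}\,\widehat{(\alpha\cdot\pp)f}(\xi),
\]
so each $\partial_j f$ is recovered from $(\alpha\cdot\pp)f$ by the matrix of second Riesz transforms with symbols $\xi_j\xi_k/|\xi|^2$. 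These are Calderón--Zygmund operators, bounded on $L^p(\mbR^3)$ for $1<p<\infty$, which yields the estimate. Combined with part (i), the two norms are equivalent on $[\Cono]^4$, hence the completions coincide and $J_{0,\Omega}$ is a topological isomorphism with bounded inverse.

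For part (iii) the domination $\|\cdot\|_{D,1,1,\Omega}\le C\,\|\cdot\|_{S,1,1,\Omega}$ is just the $p=1$ instance of the bound in (i). The converse must fail, and the obstruction is exactly that the operators with symbols $\xi_j\xi_k/|\xi|^2$ are \emph{unbounded} on $L^1(\mbR^3)$ (Stein, p.\,160). The plan is to fix a small ball $B\subset\Omega$ and to construct fields $f_n\in[C_0^\infty(B)]^4\subset[\Cono]^4$ with $\|f_n\|_{D,1,1,\Omega}$ bounded while $\|\nabla f_n\|_{1,\Omega}\to\infty$; using the $2\times2$ block structure of the $\alpha_j$, which turns $\alpha\cdot\pp$ into the off-diagonal coupling of the Pauli operator $\sigma\cdot\pp=\sum_j\sigma_j(-i\partial_j)$ with $(\sigma\cdot\pp)^2=-\Delta\,1_2$, this reduces to a two-component model and ultimately to the non-integrability of the Riesz kernel tail, which behaves like $|x|^{-3}$ and fails to lie in $L^1$ at infinity in $\mbR^3$. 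Such a sequence shows the two $p=1$ norms are not equivalent. Since a continuous bijection of Banach spaces has continuous inverse by the open mapping (bounded inverse) theorem, non-equivalence forces both $J_\Omega$ and $J_{0,\Omega}$ to fail to be onto, giving the proper inclusions $[H^{1,1}(\Omega)]^4\subsetneqq\mathbb{H}^{1,1}(\Omega)$ and $[H_0^{1,1}(\Omega)]^4\subsetneqq\mathbb{H}_0^{1,1}(\Omega)$.

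The hard part will be the explicit construction in (iii): producing compactly supported smooth fields whose Dirac image stays $L^1$-bounded while the full gradient's $L^1$ norm diverges, and correcting the (generally non-compactly-supported) inverse-Dirac profile so that everything remains inside the ball $B\subset\Omega$, thereby making the counterexample available for an arbitrary open $\Omega$. For $1<p<\infty$ there is essentially no obstacle beyond invoking the Calderón--Zygmund $L^p$-theory for the multipliers $\xi_j\xi_k/|\xi|^2$; all the genuine difficulty is concentrated in the negative $p=1$ statement.
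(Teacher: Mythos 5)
Your parts (i) and (ii) are correct. Part (i) is essentially the paper's own argument (pointwise bound $|(\alpha\cdot\pp)f|_p\le c|\nabla f|_p$, injectivity through the $L^p$ realization, density via $[\Cono]^4$). Part (ii) takes a genuinely different route: you invert the symbol, $\hat f(\xi)=\frac{\alpha\cdot\xi}{|\xi|^2}\widehat{(\alpha\cdot\pp)f}(\xi)$, and recover each $\partial_j f$ from $(\alpha\cdot\pp)f$ by second-order Riesz transforms, bounded on $L^p$ for $1<p<\infty$; norm equivalence on $[\Cono]^4$ then identifies the completions. The paper instead works with a general element of $\mathbb{H}_0^{1,p}(\mbR^3)$, derives distributional identities $\Delta\partial_jf_k=\sum\pa\pa(ig_m)$, and uses duality: density of $\Delta(C_0^\infty)$ in $L^q$ (their Lemma 3.2) together with Stein's estimate $\|\pa_j\pa_k\phi\|_q\le C\|\Delta\phi\|_q$, then an extension-by-zero argument for general $\Omega$. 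Your version is shorter and works directly on the common core $[\Cono]^4$, at the price of invoking the multiplier identity for test functions; both rest ultimately on the same Calder\'on--Zygmund input, and both (correctly) confine the surjectivity claim to $J_{0,\Omega}$.

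Part (iii), however, has a genuine gap, and it sits exactly where the paper does all of its work. You reduce everything to producing $f_n\in[C_0^\infty(B)]^4$ with $\|f_n\|_{D,1,1}$ bounded and $\|\nabla f_n\|_{1}\to\infty$, and you yourself flag this construction as ``the hard part'' without carrying it out. The difficulty is not merely technical: the failure of $L^1$-boundedness of the Riesz multipliers $\xi_j\xi_k/|\xi|^2$ does not transfer directly, because for compactly supported $f$ every component of $g=(\alpha\cdot\pp)f$ is a sum of derivatives and hence has integral zero, so the standard mechanism (feed an approximate identity into the kernel and watch the $|x|^{-3}$ tail) is not available; one must exhibit a \emph{mean-zero-compatible} $L^1$ datum that still lies outside the relevant Hardy-type space. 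The paper resolves this by (a) proving that the \emph{massive} operator $\alpha\cdot\pp+\beta$ (the mass term is essential: $\alpha\cdot\pp$ alone is not invertible on $L^1$) is a bijection of $\mathbb{H}^{1,1}(\mbR^3)$ onto $[L^1(\mbR^3)]^4$, via an explicit integral kernel $K\in L^1$ and Young's inequality (Lemma 4.1); (b) characterizing the local Hardy space $h^1(\mbR^3)$ by the operators $r_j'=\partial_j(1-\Delta)^{-1/2}$ (Lemma 4.3); and (c) choosing $g_0\in L^1\setminus h^1$ with $r_3'g_0\notin L^1$ and solving $(\alpha\cdot\pp+\beta)f={}^t(0,g_0,0,0)$, which yields $f\in\mathbb{H}^{1,1}(\mbR^3)$ with $\partial_jf_4=ir_j'r_3'g_0\notin L^1$ (Proposition 4.4). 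None of this structure is present in your sketch, and without it the claimed sequence $f_n$ is not produced.

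A second, smaller gap: your plan to ``correct the (generally non-compactly-supported) inverse-Dirac profile so that everything remains inside the ball $B$'' is vague and risky, since cutting off reintroduces gradient terms of uncontrolled $L^1$ size. The paper avoids this entirely with a scaling argument: once non-equivalence of the two norms on $[C_0^\infty(\mbR^3)]^4$ is known (from Proposition 4.4, density, and the same open-mapping reasoning you use), pick $f_n$ with $\|f_n\|_{S,1,1}\ge(n+1)\|f_n\|_{D,1,1}$ and set $g_n(x)=R_n^3f_n(R_nx)$; dilation preserves $L^1$ norms, multiplies both $\|\nabla\cdot\|_1$ and $\|(\alpha\cdot\pp)\cdot\|_1$ by the same factor $R_n$, and compresses supports into the unit ball, giving the counterexample inside any open $\Omega$. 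Your final step --- non-equivalence of norms plus the bounded inverse theorem forces both $J_\Omega$ and $J_{0,\Omega}$ to fail to be onto --- is correct and matches the paper's logic, but it only becomes a proof once the missing construction is supplied.
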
                                            

\SP

\begin{rem}                                                
        {\rm We don't know in (ii) whether or not it holds
     for a proper open subset $\Omega$  that $[H^{1,p}(\Omega)]^4 = \mbbO$,
     i.e., that the map $J_{\Omega}$ is onto with continuous
     inverse map  $J_{\Omega}^{-1}$ when $\Omega \subsetneqq \mbR^3$,
     although it holds by (ii) for
     $\Omega = {\mbR^3}$ that $[H^{1,p}(\mbR^3)]^4 = {\Bbb H}^{1,p}(\mbR^3)$, because this
     space coincides with $[H_0^{1,p}(\mbR^3)]^4 = {\Bbb H}_0^{1,p}(\mbR^3)$.}
\end{rem}                                                  

\SP

        For the proof we shall use a method of classical analysis rather
     than a subtle pseudo-differentical calculus, in particular,
     in the  case $p=1$.

        In Section 2 we shall prove Theorem 1.3, (i) (Proposition 2.2). In Section 3 we are
     going to give the proof of Theorem 1.3, (ii) by first dealing with $J_{\Omega}$ and
     then $J_{0,\Omega}$. Theorem 1.3, (iii), the case that $p = 1$, will be discussed and proved
     in Section 4.

\vskip 30pt

\section{{\bf Continuity of the map ${\mathbf J_{\Omega}}$}}                 

\SP

        In this section we are going to prove Theorem 1.3, (i). Let
     $\alpha_j$, $j = 1, 2, 3$, be the Dirac matrices given in (\ref{1alpha}).
     Then we have

\begin{lem}                                                        
        Let ${\mb a} = {}^t(a_1, a_2, a_3, a_4) \in {\mathbb C}^4$. Then,
     for $p \in [1, \infty)$,
\begin{equation}\label{2alap}                                        
       |\alpha_j{\mb a}|_p = |{\mb a}|_p \q (j = 1, 2, 3),
\end{equation}
     where the norm $| \cdot |_p$ is given by {\rm(\ref{1gpnorm})}.
\end{lem}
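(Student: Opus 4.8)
The plan is to compute the action of each Dirac matrix $\alpha_j$ on a vector $\mb a \in \mbC^4$ explicitly and observe that, because $\alpha_j$ is built from the Pauli matrix $\sigma_j$ in an off-diagonal block arrangement, multiplication by $\alpha_j$ merely permutes the four components of $\mb a$ up to unimodular scalar factors. Since the $p$-norm $|\mb a|_p = \big[\sum_{k=1}^4 |a_k|^p\big]^{1/p}$ depends only on the moduli $|a_k|$, any such permutation-with-phases leaves $|\mb a|_p$ unchanged for every $p \in [1,\infty)$.

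Concretely, writing $\mb a = {}^t(a_1,a_2,a_3,a_4)$ and using the block form $\alpha_j = \left(\begin{smallmatrix} 0_2 & \sigma_j \\ \sigma_j & 0_2 \end{smallmatrix}\right)$ from (\ref{1alpha}), I would carry out the three cases one at a time. For $j=3$, since $\sigma_3 = \mathrm{diag}(1,-1)$, we get $\alpha_3 \mb a = {}^t(a_3, -a_4, a_1, -a_2)$, whose components have moduli $|a_3|,|a_4|,|a_1|,|a_2|$ — a reordering of the original four moduli. For $j=1$, since $\sigma_1$ is the swap matrix, $\alpha_1 \mb a = {}^t(a_4, a_3, a_2, a_1)$, again a reordering. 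For $j=2$, since $\sigma_2 = \left(\begin{smallmatrix} 0 & -i \\ i & 0 \end{smallmatrix}\right)$, we get $\alpha_2 \mb a = {}^t(-i a_4, i a_3, -i a_2, i a_1)$, whose component moduli are $|a_4|,|a_3|,|a_2|,|a_1|$. In every case the multiset of moduli $\{|a_1|,|a_2|,|a_3|,|a_4|\}$ is preserved.

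It then follows immediately that
\begin{equation*}
  |\alpha_j \mb a|_p^p = \sum_{k=1}^4 |(\alpha_j \mb a)_k|^p = \sum_{k=1}^4 |a_k|^p = |\mb a|_p^p,
\end{equation*}
and taking $p$-th roots gives the claimed identity (\ref{2alap}) for each $j=1,2,3$ and every $p\in[1,\infty)$.

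This argument is entirely computational and I do not anticipate any genuine obstacle; the only point requiring minor care is bookkeeping the index reordering correctly in each of the three cases, and observing that the unimodular phases $\pm 1, \pm i$ that appear are irrelevant because the $p$-norm sees only the moduli of the entries.
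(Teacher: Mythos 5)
Your proof is correct and follows essentially the same route as the paper: compute $\alpha_j{\mb a}$ explicitly and observe that it is a permutation of the components up to unimodular factors, which the $p$-norm cannot see. The paper only writes out the case $j=1$ (obtaining ${}^t(a_4,a_3,a_2,a_1)$, exactly as you do) and declares $j=2,3$ similar, whereas you verify all three cases; your computations for $j=2,3$ are accurate.
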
                                                    

\begin{proof}                                               
        By the definition of $\alpha_1$ we have
\begin{equation*}
         \alpha_1{\mb a} = {}^t(a_4, a_3, a_2, a_1),
\end{equation*}
     and hence
\begin{equation*}
     |\alpha_1{\mb a}|_p^p = |a_4|^p + |a_3|^p + |a_2|^p + |a_1|^p
                                                      = |{\mb a}|_p^p\,.
\end{equation*}
     In quite a similar manner (\ref{2alap}) can be proved for $j = 2, 3$.
\end{proof}                                           

        Now we are in a position to show the continuity of the map $J_{\Omega}$ and
     $J_{0,\Omega}$ given by
     (\ref{1J}).

\begin{prop}                                            
        Let $\Omega$ be an open subset of $\mbbR^3$ and let
     ${\mb f} \in [H^{1,p}(\Omega)]^4$. Then, for $p \in [1, \infty)$,
     we have ${\mb f} \in \mbbO$ and there exists a positive constant $C = C_p$,
     depending only on $p$, not on $\Omega$, such that
\begin{equation}\label{2est}                                                
       \|{\mb f}\|_{D,1,p,\Omega} \le C\|{\mb f}\|_{S,1,p,\Omega},
\end{equation}
     where the norms $\|{\mb f}\|_{D, 1,p,\Omega}$ and $\|{\mb f}\|_{S,1,p,\Omega}$
     are given in {\rm (\ref{1S1p})} and {\rm (\ref{1SOB})}, respectively. Thus
     the identity maps $J_{\Omega}$ on $[H^{1,p}(\Omega)]^4$ and
     $J_{0,\Omega}$ on $[H_0^{1,p}(\Omega)]^4$ are continuous, one-to-one maps from
     $[H^{1,p}(\Omega)]^4$ into $\mbbO$, and $[H_0^{1,p}(\Omega)]^4$ into $\mbbHO$.
     Further $[H_0^{1,p}(\Omega)]^4$ is a dense subset of $\mbbHO$.
\end{prop}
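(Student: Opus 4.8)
The plan is to establish the norm inequality~\eqref{2est} pointwise first, and then upgrade it to the desired continuity, injectivity, and density statements by soft functional-analytic arguments. The crucial observation is that the Dirac operator $\alpha\cdot\pp$ is a first-order operator built linearly out of the derivatives $\partial_j$ via the matrices $\alpha_j$, so its $|\cdot|_p$-size at each point $x$ should be controlled by the $|\cdot|_p$-size of the full gradient $\nabla\mbf(x)$. Concretely, I would start from
\begin{equation*}
   (\alpha\cdot\pp)\mbf(x) = \sum_{j=1}^3 \alpha_j(-i\partial_j)\mbf(x),
\end{equation*}
apply the triangle inequality for $|\cdot|_p$ on $\mbC^4$, and then invoke Lemma~2.1 to replace each $|\alpha_j(-i\partial_j)\mbf(x)|_p$ by $|\partial_j\mbf(x)|_p$. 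This gives $|(\alpha\cdot\pp)\mbf(x)|_p \le \sum_{j=1}^3 |\partial_j\mbf(x)|_p$. Since there are only three terms, a standard power-mean (or H\"older) comparison bounds the right-hand side by a constant multiple of $\big(\sum_{j=1}^3 |\partial_j\mbf(x)|_p^p\big)^{1/p} = |\nabla\mbf(x)|_p$, with the constant depending only on $p$ (one may take $C_p = 3^{1-1/p}$), and crucially not on $\Omega$.

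Having the pointwise bound $|(\alpha\cdot\pp)\mbf(x)|_p \le C_p\,|\nabla\mbf(x)|_p$, I would integrate the $p$-th powers over $\Omega$. Adding the common term $\int_\Omega |\mbf(x)|_p^p\,dx$ to both sides and comparing with the definitions~\eqref{1S1p} and~\eqref{1SOB} of the two norms yields $\|\mbf\|_{D,1,p,\Omega} \le C\|\mbf\|_{S,1,p,\Omega}$ for a constant $C$ depending only on $p$. In particular this shows that any $\mbf \in [H^{1,p}(\Omega)]^4$ automatically satisfies $\mbf, (\alpha\cdot\pp)\mbf \in [L^p(\Omega)]^4$, so $\mbf \in \mbbO$, and the inclusions $[H^{1,p}(\Omega)]^4 \subset \mbbO$ and $[H_0^{1,p}(\Omega)]^4 \subset \mbbHO$ hold. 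The inequality~\eqref{2est} is exactly the statement that the identity maps $J_\Omega$ and $J_{0,\Omega}$ are bounded (hence continuous) between the respective Banach spaces.

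Injectivity of $J_\Omega$ and $J_{0,\Omega}$ is then immediate: these are identity maps on the underlying functions, so $J_\Omega\mbf = 0$ forces $\mbf = 0$ as an $L^p$ function, and likewise for $J_{0,\Omega}$; alternatively, continuity of the inclusion and the fact that both are realized as limits of the \emph{same} Cauchy sequences of smooth functions identify the elements. For the density claim, I would argue that $[\Cono]^4$ is, by definition~\eqref{1S1p}, dense in $\mbbHO$ with respect to $\|\cdot\|_{D,1,p,\Omega}$, while $[\Cono]^4$ is also dense in $[H_0^{1,p}(\Omega)]^4$ with respect to $\|\cdot\|_{S,1,p,\Omega}$ by the definition of that space. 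Since $[\Cono]^4 \subset [H_0^{1,p}(\Omega)]^4 \subset \mbbHO$ and the $D$-norm is dominated by the $S$-norm, the $\|\cdot\|_{D,1,p,\Omega}$-closure of $[\Cono]^4$ (which is all of $\mbbHO$) is contained in the $\|\cdot\|_{D,1,p,\Omega}$-closure of $[H_0^{1,p}(\Omega)]^4$, giving density.

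The argument is essentially routine, and I do not expect a serious obstacle: the only point requiring care is that the constant $C_p$ must be shown to be genuinely independent of $\Omega$, which is guaranteed because the pointwise inequality holds at every $x$ before any integration and Lemma~2.1 is purely algebraic. A minor subtlety worth flagging is the distinction between the identity-map formulation and the abstract-completion formulation of the spaces; one should verify that the limits defining elements of $[H_0^{1,p}(\Omega)]^4$ and $\mbbHO$ are compatible, so that ``$J_{0,\Omega}\mbf = \mbf$'' is well defined, but this follows from the norm domination once both completions are viewed as sitting inside $[L^p(\Omega)]^4$.
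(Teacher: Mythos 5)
Your proposal is correct and follows essentially the same route as the paper's proof: the pointwise bound via Lemma~2.1, the triangle inequality, and the H\"older/power-mean comparison with constant $3^{1-1/p}$, followed by integration, the identity-map injectivity argument, and density via the common dense subspace $[\Cono]^4$. The only difference is cosmetic---you spell out the completion-compatibility subtlety and the closure argument for density slightly more explicitly than the paper does.
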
                                                

\begin{proof}                                             
        Let ${\mb f} = {}^t(f_1, f_2, f_3, f_4) \in [H^{1,p}(\Omega)]^4$.
     By using Lemma 2.1 and H\"older's inequality for $p > 1$ or the triangle
     inequality for $p = 1$, we have
$$
\aligned
      |(\alpha\cdot \pp){\mb f}|_p^p &
            = |(\sum_{l=1}^3 -\alpha_l i\partial_l){\mb f}|_p^p
              \leq \big(\sum_{l=1}^3 |\alpha_l i\partial_l{\mb f}|_p\big)^p
            = \big(\sum_{\ell=1}^3 |\partial_l{\mb f}|_p\big)^p\\
      & \leq \big[\big(\sum_{l=1}^3 1^q\big)^{1/q}
              \big(\sum_{\ell=1}^3 |\partial_l{\mb f}|_p^p\,\big)^{1/p}\big]^p
            = 3^{p-1}|\nabla {\mb f}|_p^p\,,
\endaligned
$$
     where $p^{-1} + q^{-1} = 1$ and see (\ref{1Sobnorm}) for the definition of
     $|\nabla {\mb f}|_p$. It follows that
\begin{equation} \label{2acpfest}                                   
       \|(\alpha\cdot \pp){\mb f}\|_{p,\Omega}
           = \Big(\int_{\Omega} |(\alpha\cdot \pp){\mb f}|_p^p \, dx\Big)^{1/p}
           \leq 3^{\frac{p-1}{p}}\Big(\int_{\Omega} |\nabla {\mb f}|_p^p
                                                         \, dx\Big)^{1/p}
           = 3^{\frac{p-1}{p}}\|\nabla {\mb f}\|_{p,\Omega}\,,
\end{equation}
     where $\|(\alpha\cdot \pp){\mb f}\|_{p,\Omega}$ is the norm
     of $(\alpha\cdot \pp){\mb f} \in [L^p(\Omega)]^4$ given by (\ref{1pOnorm}),
     and $\|\nabla {\mb f}\|_{p,\Omega}$ is given by
\begin{equation*}
      \|\nabla {\mb f}\|_{p,\Omega}
       = \left\{ \int_{\Omega} |\nabla {\mb f}|_{p}^p \, dx \right\}^{1/p}\,.
\end{equation*}
     Then it is easy to see that (\ref{2acpfest}) implies (\ref{2est}).
     The map $J$ is one-to-one since, for ${\mb f}_j \in [H^{1,p}({\Omega})]^4$,
     $j = 1, 2$, we have
\begin{equation}\label{2oneone}                                            
      J_{\Omega}{\mb f}_1 = J_{\Omega}{\mb f}_2
                 \ \ {\rm in} \ \mbbO
         \Longrightarrow {\mb f}_1 = {\mb f}_2 \ \ {\rm in} \
                                                   [L^p(\Omega)]^4
      \Longrightarrow {\mb f}_1 = {\mb f}_2 \ \ {\rm in} \
                                                      [H^{1,p}(\Omega)]^4.
\end{equation}
     Using (\ref{2est}) and proceeding as in (\ref{2oneone}), we see that
     the identity map $J_{0,\Omega}$ on $[H_0^{1,p}(\Omega)]^4$ is also continuous and
     one-to-one on $[H_0^{1,p}(\Omega)]^4$. Since $[C_0^{\infty}(\Omega)]^4$ is dense
     in both $[H_0^{1,p}(\Omega)]^4$ and $\mathbb{H}_{0}^{1,p}(\Omega)$,
     $[H_0^{1,p}(\Omega)]^4$ is a dense subset of
$\mathbb{H}_{0}^{1,p}(\Omega)$. This completes the proof.
\end{proof}                                    

\vskip 30pt

\section{{\bf Range of the map ${\mathbf J_{0,\Omega}}$}}             

\SP

        In this section we are going to prove Theorem 1.3, (ii).

\begin{prop}                                                    
        Let $1 < p < \infty$.  Then the map
     $J_{0, \mbR^3}$ is onto $\mathbb{H}_0^{1,p}(\mbR^3)$.
\end{prop}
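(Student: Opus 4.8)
We need to show $J_{0,\mathbb{R}^3}$ is onto, i.e. every $f \in \mathbb{H}_0^{1,p}(\mathbb{R}^3)$ already lies in $[H_0^{1,p}(\mathbb{R}^3)]^4$. By Proposition 2.2 we know $[H_0^{1,p}]^4 \subset \mathbb{H}_0^{1,p}$ with $\|\cdot\|_D \le C\|\cdot\|_S$. So surjectivity is equivalent to the reverse bound: for $f \in [C_0^\infty(\mathbb{R}^3)]^4$,
$$\|\nabla f\|_{p,\mathbb{R}^3} \le C' \|(\alpha\cdot\pp)f\|_{p,\mathbb{R}^3}.$$
If this holds on the dense class $[C_0^\infty]^4$, both completions coincide and $J_{0,\mathbb{R}^3}$ is an isomorphism.

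**Strategy.** The key is that $(\alpha\cdot\pp)$ is elliptic with constant coefficients, so its symbol is invertible off the origin and the individual derivatives $\partial_j f$ can be recovered from $(\alpha\cdot\pp)f$ by Fourier multipliers that are homogeneous of degree $0$. Concretely, $\alpha\cdot\xi$ satisfies $(\alpha\cdot\xi)^2 = |\xi|^2 I_4$, so on the Fourier side
$$\widehat{\partial_j f}(\xi) = i\xi_j \hat f(\xi) = i\,\frac{\xi_j}{|\xi|^2}(\alpha\cdot\xi)(\alpha\cdot\xi)\hat f(\xi) = \frac{\xi_j}{|\xi|}\,\frac{(\alpha\cdot\xi)}{|\xi|}\,\widehat{(\alpha\cdot\pp)f}(\xi),$$
since $\widehat{(\alpha\cdot\pp)f}(\xi) = (\alpha\cdot\xi)\hat f(\xi)$. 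Thus $\partial_j f = R_j\,(\alpha\cdot\pp)f$ where $R_j$ is the matrix-valued Fourier multiplier with symbol $m_j(\xi) = \frac{\xi_j}{|\xi|}\,\frac{\alpha\cdot\xi}{|\xi|}$. Each entry of $m_j$ is a product of Riesz-type symbols $\xi_k/|\xi|$, smooth and homogeneous of degree $0$ away from the origin.

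**The main step.** Apply the Mihlin–Hörmander multiplier theorem (or directly the boundedness of the Riesz transforms) to each scalar entry of $m_j(\xi)$. For $1 < p < \infty$ this gives $\|\partial_j f\|_{p} \le C_p \|(\alpha\cdot\pp)f\|_{p}$, componentwise, hence
$$\|\nabla f\|_{p,\mathbb{R}^3} \le C_p' \|(\alpha\cdot\pp)f\|_{p,\mathbb{R}^3} \le C_p'\|f\|_{D,1,p,\mathbb{R}^3}.$$
Combining with $\|f\|_{p} \le \|f\|_{D,1,p}$ yields $\|f\|_{S,1,p} \le C''\|f\|_{D,1,p}$ on $[C_0^\infty(\mathbb{R}^3)]^4$. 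Since the two norms are equivalent on a common dense subset, the completions are equal and $J_{0,\mathbb{R}^3}$ is onto (with continuous inverse).

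**The obstacle.** The whole argument rests on $L^p$-boundedness of Riesz transforms, which fails precisely at $p=1$ — this is exactly why the theorem restricts to $1<p<\infty$ here and why case (iii) will require a separate, classical-analysis argument. The authors announce they prefer classical analysis over pseudo-differential calculus, so rather than invoking Mihlin–Hörmander as a black box, they will likely realize the $R_j$ as convolutions against explicit Calderón–Zygmund kernels (second derivatives of the Newtonian potential, composed with $\alpha\cdot\pp$) and cite the $L^p$-boundedness of such singular integrals directly. I would set up the multiplier identity $\partial_j f = R_j (\alpha\cdot\pp)f$ carefully for $f \in [C_0^\infty]^4$ (where all Fourier manipulations are rigorous), then reduce to the known $L^p$-boundedness of singular integral operators of Calderón–Zygmund type for $1<p<\infty$; the bookkeeping of the matrix structure is routine given Lemma 2.1 and the $(\alpha\cdot\xi)^2=|\xi|^2$ identity.
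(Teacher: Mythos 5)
Your proof is correct, but it follows a genuinely different route from the paper's. You prove the a priori inequality $\|\nabla f\|_{p}\le C\|(\alpha\cdot\pp)f\|_{p}$ directly on the dense class $[C_0^\infty({\mathbb R}^3)]^4$, by inverting the symbol via $(\alpha\cdot\xi)^2=|\xi|^2I_4$ and applying Mihlin--H\"ormander (equivalently, boundedness of the double Riesz transforms $\xi_j\xi_k/|\xi|^2$) on the $L^p$ side, and then you conclude by abstract equality of completions. The paper instead starts from an arbitrary $f\in{\mathbb H}_0^{1,p}({\mathbb R}^3)$, derives the distributional identities $\Delta\partial_j f_k = $ (second derivatives of the components of $g=(\alpha\cdot\pp)f$), and runs a duality argument: it pairs $\partial_j f_k$ against $\Delta\phi$ for test functions $\phi$, bounds the pairing by $\|g\|_p\|\Delta\phi\|_q$ using Stein's inequality $\|\partial_j\partial_k\phi\|_q\le C\|\Delta\phi\|_q$ (Lemma 3.4, applied only to $C_0^\infty$ functions), and then uses the density of $\Delta(C_0^\infty({\mathbb R}^3))$ in $L^q$ (Lemma 3.2, proved by a Liouville/elliptic-regularity argument) together with $L^p=(L^q)^*$ to conclude $\partial_j f_k\in L^p$ with $\|\partial_j f_k\|_p\le C_0\|g\|_p$; a final approximation step (applying this bound to $f_n-f$) puts $f$ in $[H_0^{1,p}]^4$. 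Both arguments ultimately rest on the same Calder\'on--Zygmund fact (boundedness of $\partial_j\partial_k\Delta^{-1}$), but you use it at exponent $p$ applied to the rough datum $g$, while the paper uses it at the dual exponent $q$ applied only to smooth test functions, which is what their stated preference for ``classical analysis'' amounts to. Your version is shorter and yields the two-sided norm equivalence at once; the paper's duality version avoids applying singular integrals to general $L^p$ functions and makes visible exactly which ingredient fails at $p=1$ (the duality/density step, mirroring your observation that Riesz-transform boundedness fails there). One cosmetic remark: in your multiplier identity a factor of $i$ is dropped in the last equality, and $\partial_j f = R_j(\alpha\cdot\pp)f$ should carry that constant; it is of course irrelevant to the boundedness argument.
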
                                                

        The proof will be given after the following two lemmas.

\begin{lem}                                                          
        Let $1 < q < \infty$. Then $\Delta(C_0^{\infty}(\mbR^3))$ is dense
     in $L^q(\mbbR^3)$.
\end{lem}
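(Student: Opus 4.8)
The plan is to prove the density by a duality argument combined with a Liouville-type property of harmonic functions. Since $1 < q < \infty$, the dual of $L^q(\mathbb{R}^3)$ is $L^{q'}(\mathbb{R}^3)$ with $1/q + 1/q' = 1$, and a linear subspace of $L^q(\mathbb{R}^3)$ is dense if and only if its annihilator in $L^{q'}(\mathbb{R}^3)$ is trivial. So I would take any $g \in L^{q'}(\mathbb{R}^3)$ with
\[
   \int_{\mathbb{R}^3} g(x)\,\Delta\phi(x)\, dx = 0
      \qquad \text{for all } \phi \in C_0^{\infty}(\mathbb{R}^3),
\]
and aim to conclude $g = 0$.

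The displayed identity says precisely that $\Delta g = 0$ in the sense of distributions. The next step is to upgrade $g$ to a classical harmonic function. One may invoke the hypoellipticity of the Laplacian (Weyl's lemma); alternatively, and in keeping with the mollifier methods already used in the paper, set $g_\varepsilon = \rho_\varepsilon * g$, which is $C^\infty$ and satisfies $\Delta g_\varepsilon = \rho_\varepsilon * (\Delta g) = 0$, so $g_\varepsilon$ is a genuine harmonic function, with $\|g_\varepsilon\|_{L^{q'}} \le \|g\|_{L^{q'}}$ by Young's inequality and $g_\varepsilon \to g$ in $L^{q'}(\mathbb{R}^3)$ as $\varepsilon \to 0$.

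The heart of the argument is then the Liouville-type estimate applied to the harmonic function (either $g$ itself or each $g_\varepsilon$). Using the mean value property over the ball $B_r(x)$ together with H\"older's inequality,
\[
   |g(x)| \le \frac{1}{|B_r(x)|}\int_{B_r(x)} |g(y)|\, dy
          \le \frac{1}{|B_r(x)|}\,\|g\|_{L^{q'}}\,|B_r(x)|^{1/q}
           = \|g\|_{L^{q'}}\,|B_r(x)|^{-1/q'}.
\]
Since $|B_r(x)| = c\,r^3$ and $q' < \infty$, the exponent $-1/q'$ is strictly negative, so the right-hand side tends to $0$ as $r \to \infty$. Hence $g(x) = 0$ for every $x$, which gives $g = 0$ in $L^{q'}(\mathbb{R}^3)$ (in the mollified version, $g_\varepsilon = 0$ for each $\varepsilon$, whence $g = 0$ by passing to the limit). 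Thus the annihilator is trivial and $\Delta(C_0^{\infty}(\mathbb{R}^3))$ is dense in $L^q(\mathbb{R}^3)$.

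I expect the main point to be this final Liouville step, and it is exactly where the hypothesis $q > 1$ enters: the decay $|B_r(x)|^{-1/q'} \to 0$ requires $q' < \infty$, i.e. $q > 1$. For $q = 1$ one has $q' = \infty$, the bound degenerates into a constant, and indeed nonzero constants are bounded harmonic functions on $\mathbb{R}^3$ — consistent with the restriction to $1 < p < \infty$ in the theorem this lemma serves.
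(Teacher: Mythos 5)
Your proposal is correct, and its first half coincides with the paper's own proof: both reduce density to showing that any $g \in L^{q'}(\mathbb{R}^3)$ (the paper writes $f \in L^r$) annihilating $\Delta(C_0^{\infty}(\mathbb{R}^3))$ must vanish, and both observe that such a $g$ satisfies $\Delta g = 0$ in the sense of distributions. Where you diverge is the Liouville step. The paper invokes elliptic regularity to conclude that $g$ is smooth and then asserts that $g$ is a polynomial --- the fact being used, though left implicit, is that a harmonic tempered distribution has Fourier transform supported at the origin and hence is a polynomial --- and finally notes that no nonzero polynomial belongs to $L^{q'}(\mathbb{R}^3)$. You instead prove the vanishing quantitatively: after mollifying (or invoking Weyl's lemma), you apply the mean value property on balls $B_r(x)$ together with H\"older's inequality to get $|g(x)| \le \|g\|_{L^{q'}}\,|B_r(x)|^{-1/q'} \to 0$ as $r \to \infty$. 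Your route is more elementary and self-contained, since it bypasses the classification of tempered harmonic distributions that the paper compresses into the phrase ``hence $f(x)$ is a polynomial''; it also makes the role of the hypothesis $q>1$ explicit in the exponent $-1/q'$, in exact agreement with the paper's Remark 3.3 that the lemma fails for $q=1$ because nonzero constants are harmonic and lie in $L^{\infty}(\mathbb{R}^3) = L^1(\mathbb{R}^3)^*$. The paper's argument, for its part, is shorter once one grants the regularity and classification facts.
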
                                                   

\begin{proof} [Proof of Lemma 3.2]                         
        Suppose that  $f \in L^r(\mbbR^3)$
     with $1/q +1/r =1$ satisfies
\begin{equation*}
     \langle f,\Delta \phi \rangle = \int f(x) \Delta \phi(x) dx =0,
          \quad \text{for all} \,\, \phi \in C_0^{\infty}(\mbR^3).
\end{equation*}
     Then we obtain in the sense of distributions $\Delta f = 0$, namely, $\Delta$
     annihilates $f$. By elliptic regularity, we see that $f$ must be $C^{\infty}$, and  hence $f(x)$ is a polynomial of $x$.
     Since $f(x)$ should belong to $ L^r(\mbbR^3)$, we have $f = 0$. 
This proves Lemma 3.2.
\end{proof}                                  

\begin{rem}                                                      
           {\rm Lemma 3.2 does not hold for $q = 1$, since,
 in this case, the Laplacian $\Delta$ always annihilates  a constant $C \ne 0$
which is a nonzero element of $L^{\infty}(\mbR^3) = L^1(\mbR^3)^*$.}
\end{rem}                                                    
\begin{lem}                                                                      
        Let $1 < q < \infty$. Let $\Omega \subset \mbR^3$ be an open set. Then, for each pair
     $(j, k)$, $j, k = 1, 2, 3$, there exists a positive constant $C = C_{jk}$ such that
\begin{equation}\label{3jkdeltain}                                        
        \|\pa_j\pa_k\phi \|_{q,\Omega} \le C\|\Delta \phi \|_{q,\Omega}
                                       \qquad (\phi \in C_0^{\infty}(\Omega)).
\end{equation}
\end{lem}                                                                

\begin{proof}                                                            
     By Stein \cite{Stein}, p.59, Proposition 3, there exists a positive
constant      $C = C_{jk}$ such that
\begin{equation*}
     \|\partial_j\partial_k\phi\|_q \leq C\|\Delta\phi\|_q,
                               \qquad \phi \in C_0^{\infty}({\mbbR}^3).
\end{equation*}
     Of course, this holds for $\phi \in C_0^{\infty}({\mbbR}^3)$.
\end{proof}                                                       

\begin{proof} [Proof of Proposition 3.1]                 
        The proof will be divided into four steps.

\SP

        (I) Let $f = {}^t(f_1,f_2,f_3,f_4) \in \mathbb{H}_0^{1,p}({\mbbR}^3)$.
     Then ${f} \in [L^p({\mbbR}^3)]^4$, and
\begin{equation*}
      g:= (\alpha\cdot \pp){ f}
       = -i[\alpha_1\partial_1 { f} +\alpha_2\partial_2 { f}
           +\alpha_3\partial_3 { f}]
\end{equation*}
     belongs to $[L^p({\mbbR}^3)]^4$.
       Using the definition (1.2) 
     of the Dirac matrices $\alpha_j$, $j = 1, 2, 3$, we can rewrite this with
     ${ g}={}^t(g_1,g_2,g_3.g_4)$ as
\begin{equation}\label{3gf}                                           
  \begin{cases}
      ig_1 = (\partial_1-i\partial_2)f_4+\partial_3f_3, \hskip 80pt \\
      ig_2 = (\partial_1+i\partial_2)f_3-\partial_3f_4, \\
      ig_3 = (\partial_1-i\partial_2)f_2+\partial_3f_1, \\
      ig_4 = (\partial_1+i\partial_2)f_1-\partial_3f_2.
  \end{cases}
\end{equation}
     Then from the first and second equations of (\ref{3gf}) we have
\begin{equation*}
   \aligned
     (\partial_1+i\partial_2)ig_1 &= (\partial_1^2+\partial_2^2)f_4
           + \partial_3(\partial_1+i\partial_2)f_3 \\
     & = (\partial_1^2+\partial_2^2)f_4 + \partial_3(ig_2+ \partial_3f_4),
   \endaligned
\end{equation*}
     so that
\begin{equation*}
     \Delta f_4 = (\partial_1^2+\partial_2^2+\partial_3^3)f_4
               = (\partial_1+i\partial_2)(ig_1)- \partial_3(ig_2),
\end{equation*}
     and hence, by applying $\pa_j$ to both sides of the above equation,
     we have, for $j = 1, 2, 3$,
\begin{equation}\label{3Del4}                                  
    \Delta\pa_j f_4 = (\pa_j\partial_1 + i\pa_j\partial_2)(ig_1)
                                         - \pa_j\partial_3(ig_2).
\end{equation}
     Similarly we have from (\ref{3gf})
\begin{equation}\label{3Del321}                                       
  \begin{cases}
       \Delta\pa_j f_3 = (\pa_j\partial_1 - i\pa_j\partial_2)(ig_2)
                                            - \pa_j\partial_3(ig_1), \\
       \Delta\pa_j f_2 = (\pa_j\partial_1 + i\pa_j\partial_2)(ig_3)
                                                - \pa_j\partial_3(ig_4), \\
       \Delta\pa_j f_1 = (\pa_j\partial_1 - i\pa_j\partial_2)(ig_4) -
                                                      \pa_j\partial_3(ig_3).
  \end{cases}
\end{equation}
     The equalities in (\ref{3Del4}) and (\ref{3Del321}) should be interpreted
     as equalities in the space ${\mathcal D}'({\mbbR}^3)$ of distributions
     on ${\mbbR}^3$.

\SP

        (II) Our first goal is to show that each distribution $\partial_jf_k$ actually
     belongs to $L^p({\mbbR}^3)$, where $j = 1, 2, 3$ and $k = 1, 2, 3, 4$, namely,
     for each $j$ and $k$ there exists $F_{jk} \in L^p({\mbbR}^3)$ such that
\begin{equation}\label{3f4}                                            
         \<\partial_jf_k, \ \phi\> = \int_{{\mbbR}^3} F_{jk}(x)\phi(x)\, dx
\end{equation}
     for any $\phi \in C_0^{\infty}({\mbbR}^3)$, where the left-hand side is a  bilinear form on
     ${\mathcal D}'({\mbbR}^3) \times C_0^{\infty}({\mbbR}^3)$.
This will show that $f$ belongs to $[H^{1,p}({\mbbR}^3)]^4$.
We shall prove (\ref{3f4}) for $k = 4$
     and $j = 1, 2, 3$ since other cases can be proved in a similar manner.
     After that, finally we show that $f$ belongs to
$[H_0^{1,p}({\mbbR}^3)]^4$ to complete our proof.

\SP

       (III) Let $q$ be the conjugate of $p$ or let $q$ satisfy $p^{-1} + q^{-1} = 1$.
     We see from (\ref{3Del4}) that for $\phi \in \Cono$,
\begin{eqnarray*}
     \langle \pa_j f_4, \Delta \phi \rangle
        &=& \langle \Delta \pa_j f_4, \phi \rangle
                        = \langle (\pa_j\partial_1 + i\pa_j\partial_2)(ig_1)
                           - \pa_j\partial_3(ig_2), \phi \rangle\\
        &=& \langle ig_1, (\pa_j\partial_1 + i\pa_j\partial_2)\phi\rangle
                  - \langle ig_2, \pa_j\partial_3 \phi \rangle.
\end{eqnarray*}
     Hence by Lemma 3.5 we have
\begin{multline}\label{3ffour}                                            
  \hskip 40pt  |\langle \pa_j f_4, \Delta \phi \rangle|
            \le \|g_1\|_p \|(\pa_j\partial_1 + i\pa_j\partial_2)\phi\|_q
              + \|g_2\|_p \|\pa_j\partial_3 \phi\|_q  \\
           \hskip 63pt \le (C_{j1}+C_{j2}) \|g_1\|_p \|\Delta \phi\|_q
              + C_{j3} \|g_2\|_p \|\Delta \phi\|_q   \\
              = [(C_{j1} + C_{j2}) \|g_1\|_p + C_{j3} \|g_2\|_p] \|\Delta \phi\|_q. \hskip 82pt
\end{multline}
     Since $\Delta C_0^{\infty}({\mbbR}^3))$ is dense in $L^q({\mbbR}^3)$ as has been shown in
     Lemma 3.2, the inequality (\ref{3ffour}) is extended uniquely to a continuous linear form
     on $L^q({\mbbR}^3)$. Since $L^p({\mbbR}^3)$ is the dual space of $L^q({\mbbR}^3)$, there
     exists a function $F_{j4} \in L^p({\mbbR}^3)$ such that
     $\<\pa_j f_4, \ \psi\> = \int_{{\mbbR}^3} F_{j4}(x)\psi(x) \, dx\,, \, \psi \in L^q({\mbbR}^3)$,
     which implies (\ref{3f4}) with $k = 4$ and $j = 1, 2, 3$.
     In particular, we have also shown that
\begin{equation}\label{3.7}
   \|\partial_jf_k \|_p \leq C_0 \{\Sigma_{k=1}^4\|g_k\|_p^p\}^{1/p}
  = C_0\|g\|_p,
\end{equation}
with a positive constant $C_0$ for all $j=1,2,3$ and $k=1,2,3,4$.

\SP

       (IV) Finally, since $f$ is $\mathbb{H}_0^{1,p}({\mbbR}^3)$,
     by definition there exist a sequence $\{f_n\}_{n=1}^{\infty}$
     in $C_0^{\infty}({\mbbR}^3)$ with $f_n =(f_{n,1},f_{n,2},f_{n,3},f_{n,4})$
     such that with
     $g_n =(g_{n,1},g_{n,2},g_{n,3},g_{n,4}) := (\alpha\cdot p) f_n$,
\begin{multline*}
   \hskip 50pt \|f_n -f\|_{D,1,p,{\mbbR}^3}^p = \|f_n -f\|_p^p
            + \|(\alpha\cdot \pp) (f_n -f)\|_p^p \\
            = \|f_n -f\|_p^p + \|g_n -g\|_p^p \rightarrow 0,
                               \quad\quad n \rightarrow \infty. \hskip 55pt
\end{multline*}
     Since by the same argument used to get (\ref{3.7})
     we have $\|\partial_j (f_{n,k}-f_k)\|_p \leq C_0\|g_n-g\|_p$
     for all $j=1,2,3$ and $k=1,2,3,4$, it follows that
     $\|f_n -f\|_{S,1,p,{\mbbR}^3}^p \rightarrow 0$
     as $n \rightarrow \infty$, so that $f \in [H_0^{1,p}({\mbbR}^3)]^4$.
     This completes  the proof   of Proposition 3.1.
\end{proof}                                    

\begin{prop}                                             
        Let $1 < p < \infty$. Let $\Omega \subset \mbR^3$ be an open set and $J_{0,\Omega}$
     be given in {\rm (\ref{1J})}. Then the map $J_{0,\Omega}$ is onto $\mathbb{H}_0^{1,p}(\Omega)$.
     Further, the inverse map $J_{0,\Omega}^{-1}$ is well-defined as a bounded linear
     operator.
\end{prop}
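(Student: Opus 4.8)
The plan is to reduce the general open set $\Omega$ to the already-settled whole-space case of Proposition 3.1, by means of extension by zero. The decisive point is that I will only ever invoke the a~priori estimate (\ref{3.7}) at the level of compactly supported smooth functions, which sit simultaneously in $[\Cono]^4$ and in $[C_0^\infty(\mbR^3)]^4$; I then pass to completions. This is important because Lemma 3.2 (the density of $\Delta C_0^\infty$ in $L^q$) is a genuine whole-space fact that need not survive on a proper $\Omega$, so a direct repetition of steps (I)--(III) of Proposition 3.1 with test functions in $\Cono$ is not available. Working only on $[\Cono]^4$ sidesteps this entirely.

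The key observation is that the constant $C_0 = C_0(p)$ in (\ref{3.7}) depends only on $p$: it originates from the Calder\'on--Zygmund bound of Lemma 3.5 together with the $L^q$-duality argument, and not from any feature special to $\mbR^3$ as a whole space. Hence, for an arbitrary open $\Omega \subset \mbR^3$ and any $\phi \in [\Cono]^4$, extending $\phi$ by zero yields an element of $[C_0^\infty(\mbR^3)]^4$ whose support lies in $\Omega$; since every $L^p$-norm occurring in (\ref{3.7}) is then computed over $\Omega$, we obtain
\begin{equation*}
   \|\pa_j \phi_k\|_{p,\Omega} \le C_0\, \|(\alpha\cdot \pp)\phi\|_{p,\Omega},
   \qquad j = 1,2,3,\ k = 1,2,3,4,\ \ \phi \in [\Cono]^4. \tag{$*$}
\end{equation*}

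Next I would fix $f \in \mbbHO$ and, by definition of the completion, choose a sequence $\{\phi_n\} \subset [\Cono]^4$ with $\|\phi_n - f\|_{D,1,p,\Omega} \to 0$; in particular $\{\phi_n\}$ is Cauchy for $\|\cdot\|_{D,1,p,\Omega}$ and $\phi_n \to f$ in $[L^p(\Omega)]^4$. Applying $(*)$ to the differences $\phi_n - \phi_m \in [\Cono]^4$ gives $\|\pa_j(\phi_{n,k}-\phi_{m,k})\|_{p,\Omega} \le C_0 \|(\alpha\cdot \pp)(\phi_n - \phi_m)\|_{p,\Omega}$, so each $\{\pa_j \phi_{n,k}\}$ is Cauchy in $L^p(\Omega)$; combined with the Cauchyness of $\{\phi_{n,k}\}$ in $L^p(\Omega)$, this shows $\{\phi_n\}$ is Cauchy in $\|\cdot\|_{S,1,p,\Omega}$. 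By completeness of $[H_0^{1,p}(\Omega)]^4$ it converges there to some $\tilde f$, and comparing $L^p$-limits forces $\tilde f = f$, so $f \in [H_0^{1,p}(\Omega)]^4$ and $J_{0,\Omega}$ is onto. Finally, summing $(*)$ over $j,k$ together with the trivial bound $\|\phi_n\|_{p,\Omega} \le \|\phi_n\|_{D,1,p,\Omega}$ and passing to the limit gives $\|f\|_{S,1,p,\Omega} \le C\,\|f\|_{D,1,p,\Omega}$ for every $f$, which is precisely the boundedness of $J_{0,\Omega}^{-1}$.

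The only substantive point requiring care is the domain-independence of $C_0$ in $(*)$: one must verify that extending a $\Cono$ function by zero leaves all norms in (\ref{3.7}) unchanged and that the constant really descends from the global $\mbR^3$ estimate. Since Lemma 3.5 holds on an arbitrary $\Omega$ and the duality/density step was executed on $\mbR^3$, I expect no new analytic difficulty beyond Proposition 3.1. The restriction $1 < p < \infty$ is inherited intact from the endpoint failure of Lemma 3.2 (equivalently, of the singular-integral bound) at the extremes.
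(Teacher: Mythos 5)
Your proof is correct and is essentially the paper's own argument: both reduce the general open set $\Omega$ to the whole-space case by extending compactly supported functions by zero, and both pull the resulting gradient bound back to $\Omega$ through a completion argument. The only difference is bookkeeping --- you apply the a priori estimate (\ref{3.7}) to differences $\phi_n-\phi_m$ of test functions and invoke completeness of $[H_0^{1,p}(\Omega)]^4$, whereas the paper first shows that the zero-extension of $f$ itself lies in $\mathbb{H}_0^{1,p}(\mathbb{R}^3)=[H_0^{1,p}(\mathbb{R}^3)]^4$ (Proposition 3.1) and then applies the two-sided norm equivalence (\ref{3th12}) to $f-\phi_n$ directly.
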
                                             

\begin{proof}                                          
        (I) We have seen in Propositions 3.1 that
     $[H_0^{1,p}(\mbbR^3)]^4 = \mathbb{H}_0^{1,p}({\mbbR}^3)$ as sets and there
     exist positive constants $C_1$ and $C_2$ such that
\begin{equation}\label{3th12}                                        
          C_1\|{\mb f}\|_{D,1,p} \le \|{\mb f}\|_{S,1,p}
                                                   \le C_2\|{\mb f}\|_{D,1,p}
\end{equation}
     for ${\mb f} \in [H_0^{1,p}(\mbbR^3)]^4 = \mathbb{H}_0^{1,p}({\mbbR}^3)$.

\SP

        (II) Let ${\mb f} \in [H_0^{1,p}(\Omega)]^4$. Then there exists a sequence
     $\{\mbP_n\}_{n=1}^{\infty} \subset [\Cono]^4$ such that
\begin{equation}\label{3fPhiO}                                        
      \|{\mb f} - {\mbP}_n\|_{S,1,p,\Omega} \to 0 \qquad (n \to \infty).
\end{equation}
     Since each $\phi_n$ can be naturally extended to be an element of
     $[\Con]^4$ by setting ${\mbP}_n(x) = 0$ for
     $x \in \mbbR^3 \setminus \Omega$, we have
\begin{equation}\label{3fPhiR}                                        
      \|{\mb f} - {\mbP}_n\|_{S,1,p} \to 0 \qquad (n \to \infty),
\end{equation}
     where ${\mb f}$ is also extended to be a function on $\mbbR^3$ by setting
     $0$ outside $\Omega$, and hence ${\mb f} \in [H^{1,p}(\mbbR^3)]^4$ with
     support in the closure of $\Omega$. Therefore
     ${\mb f} \in \mathbb{H}^{1,p}({\mbbR}^3)$ and ${\mb f}$ satisfies
     (\ref{3th12}). Then, (\ref{3fPhiR}) is combined with (\ref{3th12}) to yield
\begin{equation}\label{3fPhiDR}                                        
      \|{\mb f} - {\mbP}_n\|_{D,1,p} \to 0 \qquad (n \to \infty),
\end{equation}
     which implies, together with the fact that and $\phi_n$ have support in
     $\Omega$, that
\begin{equation}\label{3fPhiDO}                                        
      \|{\mb f} - {\mbP}_n\|_{D,1,p,\Omega} \to 0 \qquad (n \to \infty).
\end{equation}
     Thus we have ${\mb f} \in \mathbb{H}_{0}^{1,p}(\Omega)$, and we
     obtain from (\ref{3th12})
\begin{equation}\label{3thO12}                                        
          C_1\|{\mb f}\|_{D,1,p,\Omega} \le \|{\mb f}\|_{S,1,p,\Omega}
                                         \le C_2\|{\mb f}\|_{D,1,p,\Omega}.
\end{equation}

        (III) Let ${\mb f} \in \mathbb{H}_{0}^{1,p}(\Omega)$. Then, starting with
     ${\mb f} \in \mathbb{H}_{0}^{1,p}(\Omega)$ proceeding as in (II), we can show that
     ${\mb f} \in [H_0^{1,p}(\Omega)]^4$ and the estimates $(\ref{3thO12})$ are
     satisfied, which completes the proof.
\end{proof}                                    

\begin{proof} [Proof of Theorem 1.3, (ii)]                     
        Theorem 1.3, (ii) follows from Propositions 3.1 and 3.5.
\end{proof}                                               

\vskip 30pt

\section{\bf The case ${\mb p = 1}$}

\SP

        The goal of this section is to prove Theorem 1.3, (iii), that is, to prove
     $[H^{1,1}(\Omega)]^4$ and $[H_0^{1,1}(\Omega)]^4$ are proper subspaces of
     $\mathbb{H}^{1,1}(\Omega)$ and $\mathbb{H}_{0}^{1,1}(\Omega)$, respectively.
     First, we are going to show, for $\Omega = {\mathbb R}^3$, that
     $[H_0^{1,1}({\mathbb R}^3)]^4 = [H^{1,1} ({\mathbb R}^3)]^4$ is a  proper
     subspace of ${\mathbb H}_{0}^{1,1}({\mathbb R}^3) = {\mathbb H}^{1,1}({\mathbb R}^3)$
     (Proposition 4.4). Then all other statements in Theorem 1.3, (iii) will follow from
     Proposition 4.4. In the following, when speaking of ${\mathbb H}_{0}^{1,1}({\mathbb R}^3)$
     or ${\mathbb H}^{1,1}({\mathbb R}^3)$, and $[H_0^{1,1}({\mathbb R}^3)]^4$ or
     $[H^{1,1}({\mathbb R}^3)]^4$, we shall use the latter, namely,
     ${\mathbb H}^{1,1}({\mathbb R}^3)$ and $[H^{1,1}({\mathbb R}^3)]^4$.  As easily seen,
     ${\mathbb H}^{1,p}(\mathbb R^3)$ is also  the subspace of $[L^p({\mathbb R}^3)]^4$
     consisting of all ${f} \in [L^p({\mathbb R}^3)]^4$ such that
     $(\alpha\cdot \pp + \beta) f \in [L^p({\mathbb R}^3)]^4$ instead of
     $(\alpha\cdot \pp) f \in [L^p({\mathbb R}^3)]^4$, where $\beta$ is the fourth
     Dirac matrix $\beta$ given by (\ref{1betam}).

\SP

\begin{lem}                                                 
        The map
\begin{equation*}
            (\alpha\cdot \pp) + \beta \,:\, \mathbb{H}^{1,1}(\mbR^3) \ni f
            \mapsto (\alpha\cdot \pp + \beta)f \in [L^1(\mbR^3)]^4
\end{equation*}
        maps $\mathbb{H}^{1,1}(\mbR^3)$ one-to-one and onto $[L^1(\mbR^3)]^4$.
\end{lem}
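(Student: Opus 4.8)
The plan is to invert $\alpha\cdot\pp+\beta$ by hand, exploiting that its square is the scalar operator $1-\Delta$. First I would record the identity
\[
   (\alpha\cdot\pp+\beta)^2 = -\Delta+1 = 1-\Delta,
\]
which follows from the anticommutation relations $\alpha_j\alpha_k+\alpha_k\alpha_j=2\delta_{jk}1_4$, $\alpha_j\beta+\beta\alpha_j=0$, $\beta^2=1_4$ of the Dirac matrices $\alpha_j$ and $\beta$: the cross terms cancel and the diagonal terms contribute $\sum_j(-i\pa_j)^2=-\Delta$. On the Fourier side this reads $(\alpha\cdot\xi+\beta)^2=(1+|\xi|^2)1_4$, so that $\det(\alpha\cdot\xi+\beta)=(1+|\xi|^2)^2$ never vanishes and $\alpha\cdot\xi+\beta$ is invertible for every $\xi\in\mbR^3$.

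Injectivity is then immediate. If $f\in\mathbb{H}^{1,1}(\mbR^3)$ satisfies $(\alpha\cdot\pp+\beta)f=0$, then, viewing $f\in[L^1(\mbR^3)]^4\subset\cS'$ and taking Fourier transforms, one gets $(\alpha\cdot\xi+\beta)\hat f(\xi)=0$ for all $\xi$; since the matrix is invertible pointwise and $\hat f$ is continuous, $\hat f\equiv 0$ and hence $f=0$.

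For surjectivity I would construct an explicit inverse. The crucial point at $p=1$, where Fourier-multiplier arguments are unavailable, is that the relevant convolution kernels are genuinely integrable. The resolvent $(1-\Delta)^{-1}$ on $\mbR^3$ is convolution with the Bessel kernel
\[
   G(x) = \frac{e^{-|x|}}{4\pi|x|}, \qquad (1-\Delta)G=\delta,
\]
and one checks directly that $\int_{\mbR^3}G\,dx=1$, while
\[
   |\nabla G(x)| = \frac{(1+|x|)\,e^{-|x|}}{4\pi|x|^2}
\]
has only a $|x|^{-2}$ singularity at the origin, which is integrable in three dimensions, so that $\pa_jG\in L^1(\mbR^3)$ for $j=1,2,3$ as well. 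Consequently the operator $S:=(\alpha\cdot\pp+\beta)(1-\Delta)^{-1}$, explicitly
\[
   Sg = \beta(G*g) -i\sum_{j=1}^3\alpha_j\,(\pa_jG)*g,
\]
is a matrix of convolutions against $L^1$ kernels and hence maps $[L^1(\mbR^3)]^4$ boundedly into itself. Given $g\in[L^1(\mbR^3)]^4$, set $f:=Sg\in[L^1(\mbR^3)]^4$; then, all identities holding in $\cD'(\mbR^3)$,
\[
   (\alpha\cdot\pp+\beta)f = (\alpha\cdot\pp+\beta)^2(G*g) = (1-\Delta)(G*g) = ((1-\Delta)G)*g = g.
\]
Thus $f\in[L^1(\mbR^3)]^4$ and $(\alpha\cdot\pp+\beta)f=g\in[L^1(\mbR^3)]^4$, so by the characterization of $\mathbb{H}^{1,1}(\mbR^3)$ recorded just before the lemma this gives $f\in\mathbb{H}^{1,1}(\mbR^3)$ with $(\alpha\cdot\pp+\beta)f=g$. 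Hence the map is onto; it is worth emphasizing that no control of the full gradient $\nabla f$ is needed or available here, which is precisely the phenomenon distinguishing $p=1$.

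The step I expect to carry the real content is surjectivity, and within it the verification that the candidate inverse $S$ actually lands in $[L^1(\mbR^3)]^4$. At $p=1$ the boundedness of $S$ cannot be read off from a Fourier-multiplier estimate, and must instead come from the genuine integrability of the kernels $G$ and $\pa_jG$, an integrability special to the first derivatives of the order-two Bessel kernel in three dimensions, the borderline $|x|^{-2}$ singularity being integrable against the three-dimensional volume element. Once this is in place, the distributional identities $(\alpha\cdot\pp+\beta)^2=1-\Delta$ and $(1-\Delta)(G*g)=g$ are routine, obtained by testing against $C_0^\infty(\mbR^3)$ functions.
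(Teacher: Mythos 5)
Your proof is correct, and its core construction is the same as the paper's: the inverse of $\alpha\cdot\pp+\beta$ is convolution with the matrix kernel $(\alpha\cdot\pp+\beta)G=\beta G-i\sum_{j=1}^3\alpha_j\,\pa_jG$, and everything hinges on $G$ and $\pa_jG$ lying in $L^1(\mbbR^3)$ (the $|x|^{-2}$ singularity of $\nabla G$ being integrable in three dimensions) so that Young's inequality applies; this kernel is precisely the paper's $K(x)$. Where you genuinely differ is in the functional-analytic execution of the two halves. For surjectivity, the paper approximates $g\in[L^1(\mbbR^3)]^4$ by Schwartz functions, uses the bounded extension $B_0$ of the pseudodifferential operator $B$, and passes to the limit using closedness of $H_0=\alpha\cdot\pp+\beta$ on $\mathbb{H}^{1,1}(\mbbR^3)$; you instead verify $(\alpha\cdot\pp+\beta)Sg=g$ directly in $\cD'(\mbbR^3)$ and conclude membership $Sg\in\mathbb{H}^{1,1}(\mbbR^3)$ from the distributional characterization stated just before the lemma. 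That is legitimate: the paper itself asserts this characterization, and its own claim that $H_0$ is closed rests on the same fact. For injectivity, the paper argues by approximating $f$ in the $\mathbb{H}^{1,1}$ norm and using $B_0(\alpha\cdot\pp+\beta)f_m=f_m$ together with boundedness of $B_0$, while you take Fourier transforms and exploit that $\hat f$ is continuous for $f\in[L^1(\mbbR^3)]^4$ and that $\alpha\cdot\xi+\beta$ is pointwise invertible (its square being $(1+|\xi|^2)I_4$); this is shorter and bypasses the operator apparatus entirely. What the paper's packaging buys is the bounded two-sided inverse $B_0$ on $[L^1(\mbbR^3)]^4$ as an explicit named object, consonant with the pseudodifferential viewpoint of the remark that follows and with the later use of $(\alpha\cdot\pp+\beta)^{-1}$ in the strategy preceding Proposition 4.4; what yours buys is brevity and a more elementary, self-contained argument.
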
                                                         

\begin{proof}                                                    
        (I) We define the Dirac operator $H_0 = (\alpha\cdot \pp) + \beta$ as a linear
     operator in $[L^1(\mbR^3)]^4$ with domain $D(H_0) = \mathbb{H}^{1,1}(\mbR^3)$.
     It is easy to see that $H_0$ is a closed operator in $[L^1(\mbR^3)]^4$. Let
     the operator $H = (\alpha\cdot \pp) + \beta$ be defined as a pseudodifferential
     operator acting on $[\cS'(\mbbR^3)]^4$, the dual space of $[\cS(\mbbR^3)]^4$,
     with $4\times 4$ matrix symbol
\begin{equation}                                                       
         \sigma_H(\xi) = \alpha\cdot\xi + \beta
                       = \sum_{j=1}^3 \xi_j\alpha_j + \beta.
\end{equation}
     Then the operator $H_0$ can be viewed as the restriction of the operator $H$
     to $\mathbb{H}^{1,1}(\mbR^3)$. Let $B$ be a pseudodifferential operator acting
     on $[\cS'(\mbbR^3)]^4$ with symbol
\begin{equation*}
      \sigma_{B}(\xi) = (1 + |\xi|^2)^{-1}\sigma_{H}(\xi)
                               = \sigma_H(\xi)[(1 + |\xi|^2)^{-1}I_4]. \hskip 50pt
\end{equation*}
     By the anti-commutative relation
\begin{equation*}
          \alpha_j\alpha_k + \alpha_k\alpha_j = 2\delta_{jk}I_4
                               \qquad (j, k = 1, 2, 3, 4, \alpha_4 = \beta),
\end{equation*}
     where $I_4$ is the $4 \times 4$ unit matrix, we see that
\begin{equation*}
          \sigma_{H}(\xi)\sigma_{B}(\xi)
                                = \sigma_{B}(\xi)\sigma_{H}(\xi) = I_4,
\end{equation*}
     which implies that
\begin{equation}\label{4AB}                                       
       HB\mbf = BH\mbf = \mbf \qquad
                                     (\mbf \in [\cS'(\mbbR^3)]^4),
\end{equation}
     {\it i.e.,} the operator $B$ is the inverse operator of $H$ on $[\cS'(\mbbR^3)]^4$.

\SP

        (II) Note that
\begin{equation}\label{4B0dec}                                    
                B = HB^{(1)},  \hskip 150pt
\end{equation}
     where the symbol $\sigma_{B^{(1)}}(\xi)$ of $B^{(1)}$ is given by
     $\sigma_B^{(1)}(\xi) = (|\xi|^2 + 1)^{-1}I_4$. Let
     $\sigma_{0}(\xi) = (|\xi|^2 + 1)^{-1}$ and let $\sigma_{0}(\pp)$ be the pseudodifferential
     operator on $\cS'(\mbbR^3)$ with symbol $\sigma_{0}(\xi)$. The symbol $\sigma_{0}(\xi)$
     is a $C^{\infty}$ function on $\mbbR_{\xi}^3$ and bounded together with all their
     derivatives. Then, by noting that the integrand $(|\xi|^2 + 1)^{-1}(\cF\phi)(\xi)$
     is a function in $\cS(\mbbR_{\xi}^3)$ for $\phi \in \cS(\mbbR^3)$, we have
\begin{multline*}
   \hskip 50pt (\sigma_0(\pp)\phi)(x) = \lim_{R\to\infty}
        (2\pi)^{-3/2}\int_{|\xi|<R} e^{ix\cdot\xi}(|\xi|^2 + 1)^{-1}(\cF\phi)(\xi)\,d\xi \\
            = (2\pi)^{-3}\lim_{R\to\infty} \int_{\mbbR^3}
                \Big\{\int_{|\xi|<R} e^{i(x - y)\cdot\xi}(|\xi|^2 + 1)^{-1} \,d\xi\Big\}
                   \phi(y) \, dy. \hskip 1pt \\
            = (2\pi)^{-3}\lim_{R\to\infty} \int_{\mbbR^3}
                \Big\{\int_{|\xi|<R} e^{iy\cdot\xi}(|\xi|^2 + 1)^{-1} \,d\xi\Big\}
                   \phi(x - y) \, dy. \hskip 1pt \\
\end{multline*}
     for $\phi \in \cS(\mbbR^3)$. Since $(|\xi|^2 + 1)^{-1} \in L^2(\mbbR_{\xi}^3)$, the
     integral $\int_{|\xi|<R} e^{iy\cdot\xi}(|\xi|^2 + 1)^{-1} \,d\xi$ converges in
     $L^2(\mbbR_y^3)$ as $R \to \infty$. At the same time it is known that the limit
\begin{equation*}
        \lim_{R\to\infty} (2\pi)^{-3}\int_{|\xi|<R} e^{iy\cdot\xi}(|\xi|^2 + 1)^{-1} \,d\xi
                                                                = G(y)
\end{equation*}
     exist for $y \ne 0$ with
\begin{equation*}
         G(y) =  \frac{e^{-|y|}}{4\pi|y|}, \hskip 80pt
\end{equation*}
     which is the Green function of the operator $1 - \Delta$. Thus we have
\begin{equation*}
         (\sigma_0(\pp)\phi)(x) = (G*\phi)(x) \qquad (\phi \in \cS(\mbbR^3)),
\end{equation*}
     where $G*\phi$ denotes the convolution of $G$ and $\phi$, which implies that
\begin{equation}\label{4convo}                                       
        \sigma_B^{(1)}(\pp)\mbP(x) = (G*\phi)(x)
                    = {}^t((G*\phi_1)(x), (G*\phi_2)(x), (G*\phi_3)(x), (G*\phi_4)(x))
\end{equation}
     for $\mbP = {}^t(\phi_1, \phi_2, \phi_3, \phi_4) \in [\cS(\mbbR^3)]^4$.

\SP

        (III) Let $B$ be the pseudodifferential operator as in (I). It follows from
     (\ref{4convo}) that
\begin{equation}\label{4B0}                                                   
      B\mbP(x) = \sigma_H(\pp)\sigma_B^{(1)}(\pp)\mbP(x)
                   = \Big(\sum_{j=1}^3 -i\alpha_j\pa_j + \beta\Big)(G*\mbP)(x)
\end{equation}
     for $\mbP(x) \in [\cS(\mbbR^3)]^4$, where $\pa_j = \pa/\pa x_j$.
     Define a $4 \times 4$ matrix-valued function $K(x) = (K_{k\ell}(x))_{1\le k,\ell\le4}$ on
     $\mbbR^3$ by
\begin{equation*}
               K(x) = \Big(\sum_{j=1}^3 -i\alpha_j\pa_j + \beta\Big){\mb G}(x)
                      \qquad ({\mb G}(x) = {}^t(G(x), G(x), G(x), G(x)).
\end{equation*}
     Therefore we have
\begin{equation}\label{4K(x)}                                                    
     K(x) = \frac1{4\pi}\Big[ \sum_{j=1}^3 i\alpha_j
                 \Big(\frac{x_j}{|x|^3} + \frac{x_j}{|x|^2}\Big)
                                   + \beta\frac1{|x|}\Big]e^{-|x|},
\end{equation}
     and each element $K_{k\ell}(x)$ of $K(x)$ belongs to $L^1(\mbbR^3)$. Thus
     the $k$-th component $(B\mbP)_k$, $k = 1, 2, 3, 4$, of $B\mbP$ is expressed as
\begin{equation*}
           (B\mbP)_k(x) = \sum_{\ell=1}^4 (K_{k\ell}*\phi_{\ell})(x),
\end{equation*}
     which allows us to apply Young's inequality to see that
\begin{equation*}
           \|B\mbP\|_1 \le C\|\mbP\|_1   \qquad (\mbP \in [\cS(\mbbR^3)]^4)
\end{equation*}
     with a positive constant $C$. Therefore $B$ restricted on $[\cS(\mbbR^3)]^4$ is
     uniquely extended to a bounded linear operator on $[L^1(\mbbR^3)]^4$ which will be denoted
     by $B_0$. The operator $B_0$ is actually the restriction of $B$ to $[L^1(\mbbR^3)]^4$.

\SP

        (IV) Let $\mbg \in [L^1(\mbbR^3)]^4$. Let $\{ \mbg_m \}_{m=1}^{\infty} \subset [\cS(\mbbR^3)]^4$
     be a sequence such that $\mbg_m \to \mbg$ in $[L^1(\mbbR^3)]^4$ as $m \to \infty$. It follows from
     (\ref{4AB}) that
\begin{equation*}
             H_0B_0\mbg_m = (\alpha\cdot\pp + \beta)B_0\mbg_m = \mbg_m
                                               \qquad (m = 1, 2, \cdots).
\end{equation*}
     Therefore, recalling that $B_0$ is a bounded operator on $[L^1(\mbbR^3)]^4$, we have
\begin{equation}\label{4Bmbfm}                                             
    \begin{cases}
       B_0\mbg_m \to B_0\mbg, \\
       (\alpha\cdot\pp + \beta)B_0\mbg_m = \mbg_m \to \mbg
    \end{cases}
\end{equation}
     in $[L^1(\mbbR^3)]^4$ as $m \to \infty$. Since the operator $H_0 = \alpha\cdot\pp + \beta$
     defined on ${\mathbb H}^{1,1}({\mbbR}^3)$ is a closed operator, we see
     from (\ref{4Bmbfm}) that $B_0\mbg \in {\mathbb H}^{1,1}({\mbbR}^3)$ and $H_0B_0\mbg = \mbg$,
     which implies that $H_0 = \alpha\cdot\pp + \beta$ is onto $[L^1(\mbbR^3)]^4$.

\SP

        (V) Suppose that $\mbf \in {\mathbb H}^{1,1}({\mbbR}^3)$ such that
     $H_0\mbf = (\alpha\cdot\pp + \beta)\mbf = 0$. Let
     $\{ \mbf_m \}_{m=1}^{\infty} \subset [\Con]^4$ such that $\mbf_m \to \mbf$
     in ${\mathbb H}^{1,1}({\mbbR}^3)$ as $m \to \infty$. Thus we have
\begin{equation}\label{4glim}                                                          
   \mbf_m \to \mbf, \quad
      (\alpha\cdot\pp + \beta)\mbf_m \to (\alpha\cdot\pp + \beta)\mbf \ \ \ {\rm in} \ \
                                                                      [L^1(\mbbR^3)]^4.
\end{equation}
     On the other hand, we have from (\ref{4AB})
\begin{equation}\label{4ident}                                                      
            B_0(\alpha\cdot\pp + \beta)\mbf_m = (\alpha\cdot\pp + \beta)B_0\mbf_m = \mbf_m
                                       \qquad (m = 1, 2, \cdots).
\end{equation}
     Letting $m \to \infty$ in (\ref{4ident}), noting that $B_0$ is a bounded operator
     and using (\ref{4glim}), we see that
\begin{equation*}
           0 = B_0(\alpha\cdot\pp + \beta)\mbf
                     = \lim_{m\to\infty} (\alpha\cdot\pp + \beta)B_0\mbf_m
                         = \lim_{m\to\infty} \mbf_m =  \mbf
\end{equation*}
     in $[L^1(\mbbR^3)]^4$, which implies that $H_0$ is one-to-one. This completes the proof
     of Lemma 4.1.
\end{proof}                                                    

\SP

\begin{rem}                                                             
        {\rm As has been seen, the key element of the proof of the above Lemma 4.1 is to
      show that the pseudodifferential operator $B$ given by (\ref{4B0dec}) is a
      bounded linear operator on $[L^1(\mbR^3)]^4$. Actually it can be shown that
      $B$ is a bounded linear operator on $[L^p(\mbR^3)]^4$ for $1 \le p < \infty$.
      Thus we can prove that Lemma 4.1 holds for any $1 \le p < \infty$. In fact,
      for $1 < p < \infty$, a theorem in Fefferman\,\cite{F} (Theorem, a), p.414)
      can be applied to show that $B$ is a bounded linear operator on $[L^p(\mbR^n)]^4$
      with $n = 3$. Let $\sigma(x, \pp)$ be a pseudodifferential operator in $\mbR^n$ whose
      symbol $\sigma(x, \xi)$ belongs to the H\"ormander class $S^{-b}_{1-a,\delta}({\mbR}^n)$
      with $0 \le \delta < 1 - a < 1$. Then it follows from the above theorem by Fefferman
      that $\sigma(x, \pp)$ is a bounded operator on $L^{p}(\mbR^n)$ if
      $b < na/2$ and
$$
     \left|\frac1{p}-\frac12\right| \le \frac{b}{n}
          \Big[\frac{\frac{n}{2} + \lambda}{b + \lambda}\Big]  \qquad \qquad
                        \Big(\ds \lambda = \frac{\frac{na}{2}-b}{1-a}\Big).
$$
      By taking $n = 3,\, b = 1,\, \delta = 0$, the above two condition becomes
\begin{equation}\label{4pineq}                                                   
           3\left|\frac1{p} - \frac1{2}\right| \le \frac1{a} < \frac32.
\end{equation}
      For $p > 1$ there exists $a \in (0, 1)$ which satisfies (\ref{4pineq}). For
      $p = 1$, however, there is no $a$ which satisfies (\ref{4pineq}) since
      both sides of (\ref{4pineq}) become 3/2. Indeed, our pseudodifferential operator
      $B$ has symbol $\sigma_B(\xi)$ belonging to the H\"ormander class
      $S^{-1}_{1-a,0}({\mathbb R}^3)$. To prove Lemma 4.1, which is the case
      $p = 1$, we have discussed the integral kernel of the Dirac operator. }
\end{rem}                                                         

        To proceed, we need some facts on the local Hardy space $h^1(\mbbR^3)$, which
     is introduced in Goldberg\,\cite{G} in connection with the  Hardy space $H^1(\mbbR^3)$. The
     Hardy space is (see e.g. Fefferman-Stein\, \cite{FS}) the proper subspace of $L^1({\mbbR}^3)$
     consisting of the functions $f \in L^1({\mbbR}^3)$ such that $R_j f \in L^1({\mbbR}^3)$ for
     $j=1, 2, 3$, where $R_j :=\partial_j \cdot (-\Delta)^{-1/2}$ are the Riesz
     transforms, having symbols $i\xi_j/|\xi|$. Let  $\varphi$ be a
     fixed function in the Schwartz space ${\cS(\mbbR^3)}$ such that $\varphi =1$ in a
     neighborhood of the origin. By definition a distribution $f$ belongs to $h^1({\mbbR}^3)$
     if and only if $f \in L^1({\mbbR}^3)$ and $r_j f \in L^1({\mbbR}^3)$ for $j=1,
     2, 3$, where $r_j,\, j=1, 2, 3$, are pseudodifferential operators with
     symbol $\sigma_{r_j}(\xi) = (1-\varphi(\xi))(i\xi_j/|\xi|)$ (\cite{G}, Theorem 2 (p.33)).
     The definition is independent of the choice of $\varphi$. It is a Banach
     space with norm $\|f\|_{h^1} = \|f\|_{L^1} + \sum_{j=1}^3 \|r_j f\|_{L^1}$.
     The space $h^1(\mbbR^3)$ is a proper subspace of $L^1({\mbbR}^3)$, which is strictly
     larger than the Hardy space $H^1({\mbbR}^3)$ (see e.g. \cite{G}, p.33, just after
     Theorem 3).

        Now, we are introducing the following operator
\begin{equation}\label{4r'jdef}                                             
   r'_j = \partial_j (1-\Delta)^{-1/2}
         = \frac{\partial_j}{(-\Delta)^{1/2}}
         \frac{(-\Delta)^{1/2}}{(1-\Delta)^{1/2}}
       =R_j\cdot \frac{(-\Delta)^{1/2}}{(1-\Delta)^{1/2}},
\end{equation}
     where we note that the pseudodifferential operator $(-\Delta)^{1/2}/(1-\Delta)^{1/2}$ is
     a bounded operator on $L^1({\mbbR}^3)$ (see Stein\,\cite{Stein}, p.133, Eq.(31)).

        The proof of the lemma below was inspired by the proof of \cite{G}, Theorem 2 (p.33).

\SP

\begin{lem}                                                                   
         A distribution $f$ in $\mbbR^3$ belongs to $h^1({\mbbR}^3)$ if and only if
    $f \in L^1({\mbbR}^3)$ and $r'_j f \in L^1({\mbR}^3)$ for $j= 1, 2, 3$.
\end{lem}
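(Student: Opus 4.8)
The plan is to read this lemma as the statement that Goldberg's definition of $h^1(\mbbR^3)$ through the truncated Riesz transforms $r_j$, with symbol $\sigma_{r_j}(\xi)=(1-\varphi(\xi))\,i\xi_j/|\xi|$, is equivalent to the characterization through the operators $r'_j$, whose symbol is $\sigma_{r'_j}(\xi)=i\xi_j/(1+|\xi|^2)^{1/2}$. Everything hinges on a single fact: for each $j=1,2,3$ the difference operator $r_j-r'_j$ is bounded on $L^1(\mbbR^3)$. Granting this, for any $f\in L^1(\mbbR^3)$ I write $r'_jf=r_jf-(r_j-r'_j)f$ and $r_jf=r'_jf+(r_j-r'_j)f$; since $(r_j-r'_j)f\in L^1(\mbbR^3)$ automatically, the membership $r_jf\in L^1(\mbbR^3)$ holds if and only if $r'_jf\in L^1(\mbbR^3)$, and the two families of conditions defining $h^1(\mbbR^3)$ coincide.

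To obtain this $L^1$-bound I would show that $r_j-r'_j$ is convolution with an $L^1$ kernel and then invoke Young's inequality. The operator is the Fourier multiplier with symbol
\begin{equation*}
   m_j(\xi)=\sigma_{r_j}(\xi)-\sigma_{r'_j}(\xi)
           =(1-\varphi(\xi))\frac{i\xi_j}{|\xi|}-\frac{i\xi_j}{(1+|\xi|^2)^{1/2}},
\end{equation*}
so the target is to prove that the kernel $K_j=\cF^{-1}[m_j]$ lies in $L^1(\mbbR^3)$, whence $\|(r_j-r'_j)f\|_{L^1}\le\|K_j\|_{L^1}\,\|f\|_{L^1}$.

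The two features of $m_j$ that make this work are smoothness and decay. Near the origin the factor $1-\varphi$ vanishes, so the degree-zero singularity $i\xi_j/|\xi|$ of the Riesz symbol is suppressed and one is simply left with the smooth function $m_j(\xi)=-i\xi_j/(1+|\xi|^2)^{1/2}$; hence $m_j\in C^\infty(\mbbR^3)$. At infinity $\varphi$ is rapidly decreasing, so up to a Schwartz error $m_j(\xi)$ equals $i\xi_j\big(|\xi|^{-1}-(1+|\xi|^2)^{-1/2}\big)=i\xi_j/(2|\xi|^3)+O(|\xi|^{-4})=O(|\xi|^{-2})$, and each differentiation lowers the order by one. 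Thus $|\partial^\alpha m_j(\xi)|\le C_\alpha(1+|\xi|)^{-2-|\alpha|}$, i.e. $m_j$ belongs to the Hörmander symbol class $S^{-2}_{1,0}(\mbbR^3)$.

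For a symbol of order $-2$ in dimension $3$ the kernel $K_j$ is rapidly decreasing away from the origin, since the decay of all derivatives of $m_j$ makes $x^\beta K_j(x)$ bounded for every multi-index $\beta$, while near the origin the leading high-frequency term $i\xi_j/(2|\xi|^3)$, homogeneous of degree $-2$, produces the singularity $K_j(x)=O(|x|^{-(3-2)})=O(|x|^{-1})$. As $|x|^{-1}$ is locally integrable in $\mbbR^3$ (so $\int_{|x|\le1}|x|^{-1}\,dx<\infty$) and $K_j$ decays rapidly at infinity, we conclude $K_j\in L^1(\mbbR^3)$, which yields the $L^1$-boundedness of $r_j-r'_j$ and hence the lemma. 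The delicate point, and the step I expect to be the main obstacle, is precisely this kernel estimate at the origin: in $\mbbR^3$ the order $-2$ sits at the borderline where the singularity $|x|^{-1}$ is still integrable, so the bound must be justified with care, either by quoting the standard kernel estimates for the class $S^{-2}_{1,0}$ or, concretely, through a dyadic decomposition $m_j=\sum_k m_j\,\psi(2^{-k}\cdot)$ with integration by parts on each frequency annulus and summation of the resulting kernel bounds.
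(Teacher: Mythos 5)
Your proposal is correct, and its skeleton is exactly that of the paper's proof: both reduce the lemma to the $L^1(\mbbR^3)$-boundedness of $r_j-r'_j$, obtain this by showing that the inverse Fourier transform of the symbol difference is an $L^1$ kernel, and finish with Young's inequality; moreover your observation that the difference symbol is smooth near the origin (thanks to the factor $1-\varphi$) and behaves like a symbol of order $-2$ at infinity is precisely the content of the paper's splitting $\sigma_{r_j}-\sigma_{r'_j}=\sigma_{1j}+\sigma_{2j}$ into an $S^{-2}$-type piece and a Schwartz piece. Where you genuinely diverge is at the step you yourself flag as the main obstacle. You aim for the sharp kernel bound $|K_j(x)|\lesssim|x|^{-1}$ near the origin, which does sit at the borderline of integrability in $\mbbR^3$ and forces either a citation of standard $S^{-2}_{1,0}$ kernel estimates or a dyadic decomposition. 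The paper avoids that borderline entirely with a cruder, elementary bound: after regularizing the symbol by a cutoff $\rho(\ep|\xi|)$ and integrating by parts $\ell$ times, it shows that $x_k^{\ell}\,\coF\sigma_{1j}(x)$ is bounded whenever $\pa_{\xi_k}^{\ell}\sigma_{1j}\in L^1(\mbbR^3_\xi)$, i.e. for $\ell\ge 2$ (this is also the reason your claim that $x^{\beta}K_j$ is bounded for \emph{every} $\beta$ needs the restriction $|\beta|\ge 2$: for $|\beta|\le 1$ the differentiated symbol is not integrable, and indeed $K_j$ is unbounded at the origin). Taking $\ell=2$ and $\ell=4$ yields $|\coF\sigma_{1j}(x)|\le C\min(|x|^{-2},|x|^{-4})$, which is already in $L^1(\mbbR^3)$ since $2<3<4$; no sharp $|x|^{-1}$ estimate is needed. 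So your route buys the optimal description of the singularity at the price of heavier machinery, while the paper's buys a self-contained classical-analysis argument (consistent with its stated aim of avoiding pseudodifferential calculus) at the price of a non-optimal but entirely sufficient bound.
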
                                                             

\begin{proof}                                                       
        (I) It is sufficient to show that $r_j - r'_j$,
     $j = 1, 2, 3$, are bounded linear operators on $L^1(\mbbR^3)$ (or, more exactly, the
     pseudodifferential operator $r_j - r_j'$ defined on $\cS(\mbbR^3)$ can be uniquely
     extended to a bounded linear operator on $L^1(\mbbR^3)$). Note that the operators
     $r_j$ and $r'_j$ have symbols
\begin{equation}\label{4rjr'j}                                                     
       \sigma_{r_j}(\xi) = \frac{(1-\varphi(\xi))i\xi_j}{|\xi|}, \quad
            \sigma_{r'_j}(\xi) = \frac{i\xi_j}{(1+|\xi|^2)^{1/2}}\,,
\end{equation}
     and both symbols are $C^{\infty}$ functions in ${\mathbb R}^3_{\xi}$ and bounded
     together with all their derivatives, and we have
\begin{multline*}
     \hskip 50pt \sigma_{r_j}(\xi) - \sigma_{r'_j}(\xi)
           = \frac{(1-\varphi(\xi))i\xi_j}{|\xi|}\Big(1-\frac{|\xi|}{(1+|\xi|^2)^{1/2}}\Big)
              - \frac{\varphi(\xi)i\xi_j}{(1+|\xi|^2)^{1/2}} \\
               =: \sigma_{1j}(\xi) + \sigma_{2j}(\xi). \hskip 0pt
\end{multline*}
     As in the proof of Lemma 4.1, we are going to show that, for each $j = 1, 2, 3$, the
     pseudodifferential operator with symbols $\sigma_{1j}$ and $\sigma_{2j}$ have integral kernels
     belonging to $L^1(\mbbR^3)$, in other words, that their inverse Fourier transforms
     $\coF\sigma_{1j}(x)$ and $\coF\sigma_{2j}(x)$ belong  to $L^1({\mbbR}^3)$, where $\coF$ is
     given by
\begin{equation*}
           \coF\phi(x) = (2\pi)^{-3/2}\int_{\mbbR^3} e^{ix\cdot\xi}\phi(\xi) \, d\xi.
\end{equation*}

        It is easy to see that $\coF\sigma_{2j} \in L^1({\mbbR}^3)$, because $\sigma_{2j}$
     belongs to $\cS(\mbbR^3)$, so that $\coF\sigma_{2j}$ belongs to $\cS(\mbbR^3)$ and hence it
     belongs to $L^1(\mbbR^3)$. In the rest of the proof we are going to show that
     $\coF\sigma_{1j} \in L^1({\mbbR}^3)$.

\SP

        (II) By definition we have
$$
      \sigma_{1j}(\xi) =
           i(1-\varphi(\xi))\frac{\xi_j}{|\xi|(1+|\xi|^2)^{1/2}(|\xi|+(1+|\xi|^2)^{1/2})},
$$
     and hence, $\sigma_{1j}(\xi) = O(|\xi|^{-2})$ as $|\xi| \to \infty$. Thus, by noting that
     $1-\varphi(\xi)$ is $0$ around the origin $\xi = 0$, we see that $\sigma_{1j} \in L^2(\mbbR^3)$.
     Therefore $I_j(x) = (\coF\sigma_{1j})(x)$ exists as a function in $L^2(\mbbR_x^3)$.
     Let $\rho(t)$ be a real-valued $C^{\infty}$ function on $[0, \infty)$ such that
\begin{equation*}
   \rho(t) = 1 \quad (0 \le t \le 1), \ \ \ = 0 \quad (t \ge 2).
\end{equation*}
     Then, since $\sigma_{1j}(\xi)\rho(\ep|\xi|)$, $\ep > 0$, converges to $\sigma_{1j}(\xi)$ in
     $L^2(\mbbR_{\xi}^3)$ as $\ep \downarrow 0$, we have, by setting
\begin{equation*}
           I_j(x, \ep) := \coF(\sigma_{1j}(\xi)\rho(\ep|\xi|)),
\end{equation*}
     $I_j(x, \ep)$ converges to $I_j(x)$ in $L^2(\mbbR_x^3)$ as $\ep \downarrow 0$,
     and hence there exists a decreasing sequence
\begin{equation*}
         1 \ge \ep_2 > \ep_2 > \cdots > \ep_m > \ \to 0
\end{equation*}
     such that $I_j(\ep_m, x) \to I_j(x)$ a.e. $x$ as $m \to \infty$. For the
     sake of the simplicity of notations, we shall use $\ep \le 1$ instead of $\ep_m$.

\SP

        (III) Let $\alpha = (\alpha_1, \alpha_2, \alpha_3)$ be a multi-index. Then
     we have
\begin{equation}\label{4ests1j}                                              
         |\pa_{\xi}^{\alpha}\sigma_{1j}(\xi)| \le C_{j,\alpha}(1 + |\xi|)^{-2-|\alpha|}
                                     \qquad (\xi \in \mbbR_{\xi}^3 )
\end{equation}
     with a constant $C_{j,\alpha} > 0$, where we should note that $1 - \varphi(\xi)$
     is bounded and
$$
        \pa_{\xi}^{\alpha}(1 - \varphi(\xi)) = - \pa_{\xi}^{\alpha}\varphi(\xi)
                                                       \in \cS(\mbbR_{\xi}^3)
$$
     for $\alpha \ne 0$. Let $\ell$ be a positive integer and $k = 1, 2, 3$. Then,
     by integration by parts,
\begin{multline}\label{4Jepxi}                                               
    \hskip 26pt \ds (2\pi)^{3/2}x_k^{\ell}I_j(x, \ep) = \int_{\mbbR^3}\big\{(-i\pa_{\xi_k}^{\ell})
                        e^{ix\cdot\xi}\big\}\sigma_{1j}(\xi)\rho(\ep|\xi|) \, d\xi \\
    \ds       = (-i)^{\ell}(-1)^{\ell}\int_{\mbbR^3} e^{ix\cdot\xi}
         \pa_{\xi_k}^{\ell}\big\{\sigma_{1j}(\xi)\rho(\ep|\xi|)\big\}\, d\xi. \hskip 48pt
\end{multline}
     Here, by the Leibniz formula, we have
\begin{multline*}
      \ds \pa_{\xi_k}^{\ell}\big\{\sigma_{1j}(\xi)\rho(\ep|\xi|)\big\}
            = \big(\pa_{\xi_k}^{\ell}\sigma_{1j}(\xi)\big)\rho(\ep|\xi|)
               + \sum_{m=1}^\ell {}_{\ell}C_m \big(\pa_{\xi_k}^{\ell-m}\sigma_{1j}(\xi)\big)
                       \big(\pa_{\xi_k}^{m}\rho(\ep|\xi|)\big) \\
      \ds   =: J_{0}(\xi, \ep) + J_1(\xi, \ep) \hskip 180pt
\end{multline*}
     with ${}_{\ell}C_m = \ell/(m!(\ell-m)!)$. For $m = 1, 2, \cdots, \ell$, we have
\begin{equation*}
     \xi \in \ {\rm supp}(\pa_{\xi_k}^{m}\rho(\ep|\xi|)) \Longrightarrow
                 1 \le \ep|\xi| \le 2 \Longrightarrow \ep \le \frac2{|\xi|}\,,
\end{equation*}
     where ${\rm supp}(f)$ denotes the support of $f$. Thus we can replace $\ep$ in
     $\pa_{\xi_k}^{m}\rho(\ep|\xi|)$ by $2|\xi|^{-1}$ when we evaluate
     $|\pa_{\xi_k}^{m}\rho(\ep|\xi|)|$. Therefore it follows that
\begin{equation}\label{4parho}                                               
         |\pa_{\xi_k}^{m}\rho(\ep|\xi|)| \le c(1 + |\xi|)^{-m}\chi_{\ep}(\xi),
\end{equation}
     where $c = c_{j,k,m}$ is a positive constant and $\chi_{\ep}(\xi)$ is the characteristic
     function of the set  $A_{\ep} = \{ \xi \,:\, \ep^{-1} \le |\xi| \le 2\ep^{-1} \}$.
     Since it is supposed that $\ep \le 1$, we have $A_{\ep} \subset \{ \xi \,:\, |\xi| \ge 1 \}$.
     The inequalities (\ref{4ests1j}) and (\ref{4parho}) are combined to give
\begin{equation*}
        |J_1(\xi, \ep)| \le C(1 + |\xi|)^{-2 - \ell}\chi_{\ep}(\xi) \qquad (\xi \in \mbbR_{\xi}^3)
\end{equation*}
     with positive constant $C = C_{j,k,\ell}$. Let $\ell \ge 2$. Then it is seen that
     $|J_1(\xi, \ep)|$ is dominated by $C(1 + |\xi|)^{-2 - \ell}$, which is in $L^1(\mbbR_{\xi}^3)$,
     and $J_1(\xi, \ep) \to 0$ for each $\xi \in \mbbR_{\xi}^3$ as $\ep \to 0$, and hence, by
     the Lebesgue convergence theorem, we have
\begin{equation*}
        \int_{\mbbR^3} e^{ix\cdot\xi}J_1(\xi, \ep) \, d\xi \to 0 \quad (\ep \to 0).
\end{equation*}
     Similarly, since $|J_{0}(\xi, \ep)|$ is dominated by $|\pa_{\xi_k}^{\ell}\sigma_{1j}(\xi)|$
     which is in $L^1(\mbbR_{\xi}^3)$, and $J_{0}(\xi, \ep)$ converges to
     $\pa_{\xi_k}^{\ell}\sigma_{1j}(\xi)$ for each $\xi \in \mbbR_{\xi}^3$ as $\ep \to 0$, we have
\begin{equation*}
        \int_{\mbbR^3} e^{ix\cdot\xi}J_0(\xi, \ep) \, d\xi \to \int_{\mbbR^3} e^{ix\cdot\xi}
                          \pa_{\xi_k}^{\ell}\sigma_{1j}(\xi) \, d\xi \quad (\ep \to 0).
\end{equation*}
     Therefore, by letting $\ep \to 0$ in (\ref{4Jepxi}), we obtain
\begin{equation}\label{4xlequal}                                               
        x_k^{\ell}I_j(x) = x_k^{\ell}(\coF\sigma_{1j})(x)
               = i^{\ell}(2\pi)^{-3/2}\int_{\mbbR^3} e^{ix\cdot\xi}
                          \pa_{\xi_k}^{\ell}\sigma_{1j}(\xi) \, d\xi
\end{equation}
     a.e. $x \in \mbbR^3$ for $\ell \ge 2$ and $j, k = 1, 2, 3$. Here the right-hand
     side is uniformly bounded for $x \in \mbbR^3$. Thus, by considering the case $\ell = 2$
     and $\ell = 4$, it follows that
\begin{equation*}
           |(\coF\sigma_{1j})(x)| \le C_j\min (|x|^{-2}, |x|^{-4}) \quad (j = 1, 2, 3)
\end{equation*}
     with a positive constant $C_j$, which implies that $\coF\sigma_{1j} \in L^1(\mbbR^3)$.
     This completes the proof of Lemma 4.3.
\end{proof}                                                     

\SP

        Now we are going to prove that $[H^{1,1}(\mbbR^3)]^4$ is a proper subspace of
     $\mathbb{H}^{1,1}(\mbbR^3)$, which is the most crucial part of Theorem 1.3,
     (iii) in the following strategy: Let $\mbg \in [L^1(\mbbR^3)]^4 \setminus [h^1(\mbbR^3)]^4$,
     where $h^1(\mbbR^3)$ is the local Hardy space which can be defined, by Lemma 4.3, as
     the space of all distributions $f$ such that $f \in L^1({\mbbR}^3)$ and $r'_j f \in L^1({\mbbR}^3)$
     for $j=1, 2, 3$, where $r'_j$ is given by (\ref{4r'jdef}). Set
     $\mbf = (\alpha\cdot\pp + \beta)^{-1}\mbg$. Then, by using Lemma 4.1,
     we have $\mbf \in \mathbb{H}^{1,1}(\mbbR^3)$. Then we shall be able to show that
     $\mbf \notin [H^{1,1}(\mbbR^3)]^4$.

\SP

\begin{prop}                                                           
        ${\mathbb H}^{1,1}(\mbbR^3)$ is strictly larger than $[H^{1,1}(\mbbR^3)]^4$.
\end{prop}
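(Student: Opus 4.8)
The plan is to follow the strategy already indicated before the statement. Since $h^1(\mathbb{R}^3)$ is a proper subspace of $L^1(\mathbb{R}^3)$, I would choose $g\in[L^1(\mathbb{R}^3)]^4\setminus[h^1(\mathbb{R}^3)]^4$ and set $f=(\alpha\cdot\pp+\beta)^{-1}g$, which lies in $\mathbb{H}^{1,1}(\mathbb{R}^3)$ by Lemma 4.1. It then suffices to prove the contrapositive of what we want: that $f\in[H^{1,1}(\mathbb{R}^3)]^4$ would force $g\in[h^1(\mathbb{R}^3)]^4$. Because $f$ is obtained from $g$ by an explicit Fourier multiplier, I would first make the link between these two membership conditions completely transparent.

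To that end I would use the identity $(\alpha\cdot\pp+\beta)^2=1-\Delta$, immediate from the anticommutation relations used in Lemma 4.1, which gives $(1-\Delta)f=(\alpha\cdot\pp+\beta)g$ and hence
$$
   \partial_j f = -i\sum_{l}\alpha_l\,\partial_j\partial_l(1-\Delta)^{-1}g + \beta\,\partial_j(1-\Delta)^{-1}g .
$$
The operators $\partial_j(1-\Delta)^{-1}$ have integrable kernels (they are first derivatives of the Green function $G$ already used in Lemma 4.1), so the last term always stays in $[L^1]^4$; thus membership $f\in[H^{1,1}]^4$ is controlled by whether the second--order operators $\partial_j\partial_l(1-\Delta)^{-1}=r'_jr'_l$ keep $g$ in $[L^1]^4$, while $g\in[h^1]^4$ is, by Lemma 4.3, the corresponding first--order condition $r'_n g_m\in L^1$. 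For transparency I would take $g$ scalar, $g={}^t(v,0,0,0)$ with $v\in L^1\setminus h^1$; then $f$ is explicit, with $f_3=-i\,\partial_3(1-\Delta)^{-1}v$ and $f_4=-i(\partial_1+i\partial_2)(1-\Delta)^{-1}v$, so that $\partial_j f_3=-i\,r'_jr'_3 v$ and $\partial_j f_4=-i(r'_jr'_1+i\,r'_jr'_2)v$.

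The heart of the matter --- and the step I expect to be the main obstacle --- is to pass from the second--order information ``$r'_jr'_l v\in L^1$ for all $j,l$'' back to the first--order information ``$r'_n v\in L^1$'', i.e. $v\in h^1$. This cannot be done by soft functional analysis: a single $r'_n$ does not preserve $L^1$, and the gap between the second-- and first--order Riesz data is exactly the $p=1$ discrepancy between Sobolev and Bessel--potential spaces recorded in the introduction. I would handle it by the same classical, kernel--based method as in Lemmas 4.1 and 4.3, reducing the implication to an $L^1$ estimate for an explicit convolution kernel. In fact, for the bare properness statement one does not even need the full converse: it is enough to observe that the second--order operator $r'_3r'_3$, whose symbol $-\xi_3^2(1+|\xi|^2)^{-1}$ has the direction--dependent, hence non--uniformly--continuous, behaviour $-\xi_3^2/|\xi|^2$ at infinity, is unbounded on $L^1$. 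Choosing $v\in L^1$ with $r'_3r'_3 v\notin L^1$ makes $\partial_3 f_3=-i\,r'_3r'_3 v\notin L^1$, so $f\in\mathbb{H}^{1,1}(\mathbb{R}^3)$ while $f\notin[H^{1,1}(\mathbb{R}^3)]^4$.

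This exhibits an element of $\mathbb{H}^{1,1}(\mathbb{R}^3)$ lying outside $[H^{1,1}(\mathbb{R}^3)]^4$ and proves the proposition; as the authors note, the remaining assertions of Theorem 1.3(iii) then follow from Proposition 4.4.
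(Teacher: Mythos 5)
Your setup is sound and is essentially the paper's own: you invoke Lemma 4.1, use $(\alpha\cdot\pp+\beta)^2=1-\Delta$ to write $\partial_j f$ in terms of the operators $\partial_j\partial_l(1-\Delta)^{-1}=r'_jr'_l$ applied to $g$ plus harmless terms with integrable kernels, and you correctly observe that for bare properness it suffices to exhibit one $v\in L^1$ with $r'_3r'_3v\notin L^1$ (the paper does the same reduction via its equations for $\partial_jf_k$, feeding in $g={}^t(0,g_0,0,0)$ and watching $f_4$, where you feed in ${}^t(v,0,0,0)$ and watch $f_3$; that difference is immaterial). Where you genuinely diverge from the paper is in how the ``bad'' datum is produced: the paper imports it from Goldberg's theory, taking as known that $h^1(\mathbb{R}^3)\subsetneqq L^1(\mathbb{R}^3)$ and using Lemma 4.3 to pick $g_0\in L^1\setminus h^1$ with $r'_3g_0\notin L^1$, whereas you try to manufacture $v$ directly from unboundedness of the single operator $r'_3r'_3$ on $L^1$.

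The gap is in your justification of that unboundedness, which is the step carrying all the analytic weight. You argue that the symbol $-\xi_3^2(1+|\xi|^2)^{-1}$ is not uniformly continuous because it tends at infinity to the direction-dependent function $-\xi_3^2/|\xi|^2$. This is false: the symbol is smooth with bounded gradient on all of $\mathbb{R}^3$, hence Lipschitz, hence uniformly continuous (even $\xi_3^2/|\xi|^2$ itself is uniformly continuous on $\{|\xi|\ge 1\}$; its only discontinuity is at the origin, which the factor $(1+|\xi|^2)^{-1}$ removes). So the criterion ``a Fourier--Stieltjes transform is uniformly continuous'' yields nothing here; moreover, direction-dependent radial limits at infinity do not by themselves contradict being the transform of a finite measure (the transform $e^{-ia\cdot\xi}$ of a point mass has no radial limits at all), so some genuine argument is unavoidable. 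The fact you need is true, but to prove it you must either (a) follow the paper and quote $h^1\subsetneqq L^1$ together with Lemma 4.3, or (b) carry out the kernel argument you only allude to: a translation-invariant operator bounded on $L^1$ is convolution with a finite measure, while the distributional kernel $\partial_3^2G$, $G(x)=e^{-|x|}/(4\pi|x|)$, contains the principal-value singularity $c\,(3x_3^2-|x|^2)|x|^{-5}$ near the origin, which is not a finite measure, or (c) use a dilation (Wiener-type) argument showing that the radial limits at infinity of a Fourier--Stieltjes transform must be a.e.\ constant. As written, the crucial existence of $v$ is asserted on the basis of an incorrect claim, so the proof does not go through; note also that even granted unboundedness, passing to ``there exists $v\in L^1$ with $r'_3r'_3v\notin L^1$'' tacitly uses the closed graph theorem and should be said.
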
                                                          

\begin{proof}                                                     
        (I) It follows from Lemma 4.1 that, for every $\mbg \in [L^1(\mbbR^3)]^4$ there exists
     a unique $\mbf \in {\mathbb H}^{1,1}(\mbbR^3)$ such that $\mbg = (\alpha\cdot \pp + \beta)\mbf$.
     The equation $\mbg = (\alpha\cdot\pp + \beta)\mbf$ can be written as
\begin{equation*}
    \begin{cases}
        &g_1= -i(\partial_1-i\partial_2)f_4 -i\partial_3f_3 +f_1,\\
        &g_2= -i(\partial_1+i\partial_2)f_3 +i\partial_3f_4 +f_2, \\
        &g_3= -i(\partial_1-i\partial_2)f_2 -i\partial_3f_1 -f_3,\\
        &g_4= -i(\partial_1+i\partial_2)f_1 +i\partial_3f_2 -f_4.\\
    \end{cases}
\end{equation*}
     Solving the above equation for $f_1, f_2, f_3$ and $f_4$, we obtain
\begin{equation}\label{4eqf}                                                
    \begin{cases}
         (1 - \Delta)f_1 = -i(\pa_1 - i\pa_2)g_4 - i\pa_3g_3 + g_1, \\
         (1 - \Delta)f_2 = -i(\pa_1 + i\pa_2)g_3 - i\pa_3g_4 + g_2, \\
         (1 - \Delta)f_3 = -i(\pa_1 - i\pa_2)g_2 - i\pa_3g_1 - g_3, \\
         (1 - \Delta)f_4 = -i(\pa_1 + i\pa_2)g_1 - i\pa_3g_2 - g_4, \\
    \end{cases}
\end{equation}
     where $\pa_j = \pa/\pa x_j$. Here each equation in (\ref{4eqf}) should be viewed
     as equations in $\cS'(\mbbR^3)$. As has been shown in the proof of Lemma 4.1,
     the differential operator $1 - \Delta$ has the inverse $(1 - \Delta)^{-1}$
     as a pseudodifferential operator with symbol $(1 + |\xi|^2)^{-1}$, and hence,
     by applying $(1 - \Delta)^{-1}$  and $\pa_j$ to each of the equations in (\ref{4eqf}),
     it follows that
\begin{equation}\label{4eqf2}                                              
    \begin{cases}
         \partial_jf_1 = -i(\partial_j\partial_1 - i\partial_j\partial_2)
                (1-\Delta)^{-1}g_4 - i\partial_j\partial_3(1-\Delta)^{-1}g_3
                +\partial_j(1-\Delta)^{-1}g_1,\\
         \partial_jf_2 = -i(\partial_j\partial_1 + i\partial_j\partial_2)
                  (1-\Delta)^{-1}g_3 + i\partial_j\partial_3(1-\Delta)^{-1}g_4
                +\partial_j(1-\Delta)^{-1}g_2,\\
         \partial_jf_3 = -i(\partial_j\partial_1 - i\partial_j\partial_2)
                (1-\Delta)^{-1}g_2 - i\partial_j\partial_3(1-\Delta)^{-1}g_1
                -\partial_j(1-\Delta)^{-1}g_3,\\
         \partial_jf_4 = -i(\partial_j\partial_1 + i\partial_j\partial_2)
                  (1-\Delta)^{-1}g_1 + i\partial_j\partial_3(1-\Delta)^{-1}g_2
                 -\partial_j(1-\Delta)^{-1}g_4.
    \end{cases}
\end{equation}

\SP

        (II) By Lemma 4.3 we can choose $g_0 \in L^1(\mbbR^3)\setminus h^1(\mbbR^3)$
     such that $r'_3g_0 = \pa_3(1 - \Delta)^{-1/2}g_0 \notin L^1(\mbbR^3)$. Then define
     $\mbg = {}^t(g_1, g_2, g_3, g_4) \in [L^1(\mbbR^3)]^4$ by
\begin{equation*}
        g_1(x) = g_3(x) = g_4(x) = 0 \ \ \ {\rm and} \ \ \ g_2(x) = g_0(x).
\end{equation*}
     Then we have from (\ref{4eqf2})
\begin{equation}\label{4pa3}                                                
    \pa_jf_4 = i\partial_j\partial_3(1-\Delta)^{-1}g_2 \qquad (j = 1, 2, 3).
\end{equation}
     Since $r_3'g_2 \notin L^1(\mbbR^3)$, we have necessarily $r_3'g_2 \notin h^1$. Then
     $\big(r'_1(r'_3g_2), r'_2(r'_3g_2), r'_3(r'_3g_2)\big)$ does not belong to
     $[L^1({\mbbR}^3)]^3$. It follows from (\ref{4r'jdef}) that the
     symbol $s_{j3}(\xi)$ of $r'_jr'_3$ is given by
\begin{equation*}
          s_{j3}(\xi) = \frac{i\xi_j}{(1 + |\xi|^2)^{1/2}}\frac{i\xi_3}{(1 + |\xi|^2)^{1/2}}
                               = (i\xi_j)(i\xi_3)(1 + |\xi|^2)^{-1},
\end{equation*}
     and hence by using (\ref{4r'jdef}) again, we see that
\begin{equation}\label{4r'jr'3}                                                
        r'_jr'_3g_2 = (\pa_j(1 - \Delta)^{-1/2})(\pa_3(1 - \Delta)^{-1/2})g_2
                                   = \pa_j\pa_3(1 - \Delta)^{-1}g_2
\end{equation}
     for $j = 1, 2, 3$. Thus we have from (\ref{4pa3}) and (\ref{4r'jr'3})
\begin{equation*}
      (\pa_1f_4, \pa_2f_4, \pa_3f_4) = i(r'_1r'_3g_2, r'_2r'_3g_2, r'_3r'_3g_2)
                                          \notin [L^1(\mbbR^3)]^3,
\end{equation*}
     which implies that $\mbf \notin [H^{1,1}(\mbbR^3)]^4$. This completes the
     proof of Proposition 4.4.
\end{proof}                                           

\SP

\begin{prop}                                                 
        Let $\Omega$ be an open subset of ${\mathbb R}^3$.
Then

        {\rm (i)} $[H_0^{1,1}(\Omega)]^4$  is a proper subspace of ${\mathbb H}_0^{1,1}(\Omega)$.

        {\rm (ii)} $[H^{1,1}(\Omega)]^4$  is a proper subspace of ${\mathbb H}^{1,1}(\Omega)$.
\end{prop}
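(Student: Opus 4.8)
The plan is to produce, inside an arbitrary ball compactly contained in $\Omega$, a localized version of the counterexample of Proposition 4.4, and then to cut it off so that it lies in $\mathbb{H}_0^{1,1}(\Omega)$ while still failing to belong to the Sobolev space. As $\Omega$ is nonempty and open, fix a ball $B=B(x_0,\delta)$ with $\overline{B}\subset\Omega$ and a cutoff $\chi\in C_0^\infty(\Omega)$ with $\chi\equiv 1$ on a neighbourhood $U$ of $\overline{B}$. I would rerun the construction of Proposition 4.4 with a \emph{compactly supported} datum: by the local Hardy space characterization of Lemma 4.3 one can choose $g_0\in L^1(\mathbb{R}^3)\setminus h^1(\mathbb{R}^3)$ with $\mathrm{supp}\,g_0\subset B$ (a concrete choice behaves like $|x-x_0|^{-3}(\log(1/|x-x_0|))^{-2}$ near $x_0$, cut off inside $B$; its local maximal function is non-integrable near $x_0$, so that $r'_{j}g_0\notin L^1(\mathbb{R}^3)$ for some index). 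Putting $g={}^{t}(0,g_0,0,0)$ and $f=(\alpha\cdot\pp+\beta)^{-1}g\in\mathbb{H}^{1,1}(\mathbb{R}^3)$ (Lemma 4.1), the computation in Proposition 4.4 gives $\partial_j f_k=\pm i\,\partial_m\partial_l(1-\Delta)^{-1}g_0$ for the relevant indices, and exactly as there one obtains $\partial_{j_0}f_{k_0}\notin L^1(\mathbb{R}^3)$ for some $j_0,k_0$.

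The new point, and the heart of the matter, is that this failure is purely \emph{local}. The Green function $G(x)=e^{-|x|}/(4\pi|x|)$ of $1-\Delta$ and each of the second-order kernels $\partial_m\partial_l G$ are smooth away from the origin and decay exponentially there; since $g_0$ is supported in $B$, every $\partial_j f_k=(\partial_m\partial_l G)*g_0$ is therefore smooth and exponentially small outside the neighbourhood $U$ of $\overline{B}$. Consequently $\partial_{j}f_{k}\in L^1(\mathbb{R}^3\setminus U)$ for all $j,k$, and the sole obstruction to $\partial_{j_0}f_{k_0}\in L^1$ is concentrated in $U$, i.e. $\int_{U}|\partial_{j_0}f_{k_0}|\,dx=\infty$.

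Finally I would cut off and transfer to $\Omega$. Put $h=\chi f$, which has compact support in $\Omega$. Then $h\in[L^1(\Omega)]^4$, and from $(\alpha\cdot\pp)(\chi f)=\chi\,(\alpha\cdot\pp)f+\sum_{j}\alpha_j(-i\partial_j\chi)f$ together with $(\alpha\cdot\pp)f=g-\beta f\in[L^1(\mathbb{R}^3)]^4$ one gets $(\alpha\cdot\pp)h\in[L^1(\Omega)]^4$; mollifying $h$ produces elements of $[C_0^\infty(\Omega)]^4$ converging to $h$ in the norm $\|\cdot\|_{D,1,1,\Omega}$, so $h\in\mathbb{H}_0^{1,1}(\Omega)$. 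On the other hand $\partial_{j_0}h_{k_0}=\chi\,\partial_{j_0}f_{k_0}+(\partial_{j_0}\chi)\,f_{k_0}$; the second term is a smooth compactly supported function (as $\partial_{j_0}\chi$ is supported off $U$, where $f_{k_0}$ is smooth) and hence lies in $L^1$, while $\chi\,\partial_{j_0}f_{k_0}$ equals $\partial_{j_0}f_{k_0}$ on $U$ and is bounded elsewhere on $\mathrm{supp}\,\chi$, so it is not in $L^1$ by the previous paragraph. Thus $\partial_{j_0}h_{k_0}\notin L^1(\Omega)$ and $h\notin[H^{1,1}(\Omega)]^4\supset[H_0^{1,1}(\Omega)]^4$. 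Since $h\in\mathbb{H}_0^{1,1}(\Omega)\subset\mathbb{H}^{1,1}(\Omega)$, this single element simultaneously witnesses (i) and (ii).

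The step I expect to be the main obstacle is the locality assertion of the second paragraph: one must be sure that a compactly supported datum not in $h^1$ yields a field whose derivative fails to be integrable \emph{only near the support}, so that the pathology survives multiplication by a cutoff. This is exactly where the exponential decay of the resolvent kernel $G$ (controlling the far field) and the local Hardy-space characterization of Lemma 4.3 (detecting the near-field singularity) must be used in tandem; without the compact support and the resulting locality, the cutoff would be free to destroy the obstruction.
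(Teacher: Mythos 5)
Your proposal is essentially correct, but it follows a genuinely different route from the paper's. The paper proves Proposition 4.5 by a soft functional-analytic argument: since $[C_0^{\infty}(\mathbb{R}^3)]^4$ is dense in both $[H^{1,1}(\mathbb{R}^3)]^4$ and $\mathbb{H}^{1,1}(\mathbb{R}^3)$, Proposition 4.4 forces the norms $\|\cdot\|_{S,1,1,\mathbb{R}^3}$ and $\|\cdot\|_{D,1,1,\mathbb{R}^3}$ to be non-equivalent on $[C_0^{\infty}(\mathbb{R}^3)]^4$; a sequence $\mbf_n$ with $\|\mbf_n\|_{S,1,1,\mathbb{R}^3}\ge (n+1)\|\mbf_n\|_{D,1,1,\mathbb{R}^3}$ is then transplanted into the unit ball (assumed, without loss of generality, to lie in $\Omega$) by the dilations $\mbg_n(x)=R_n^3\mbf_n(R_nx)$ --- exploiting that the two first-order seminorms scale identically while $R_n\ge 1$ absorbs the zeroth-order terms --- and properness of both inclusions follows from non-equivalence of the norms on $[C_0^{\infty}(\Omega)]^4$ (implicitly via the bounded inverse theorem, the maps $J_{0,\Omega}$, $J_{\Omega}$ being continuous injections between Banach spaces). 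You instead manufacture an explicit element $h\in\mathbb{H}_0^{1,1}(\Omega)\setminus[H^{1,1}(\Omega)]^4$: you rerun the construction of Proposition 4.4 with a compactly supported datum $g_0\in L^1(\mathbb{R}^3)\setminus h^1(\mathbb{R}^3)$ placed inside $\Omega$, use the off-diagonal smoothness and exponential decay of the kernel $G$ to confine the non-integrability of $\nabla f$ to a neighborhood of $\mathrm{supp}\,g_0$, and then cut off. What your approach buys: a concrete witness, no appeal to the open mapping theorem, and one element settling (i) and (ii) simultaneously. What the paper's buys: no need for a compactly supported datum outside $h^1$, no kernel-locality analysis, no cutoff computation --- the dilation trick does all the transplanting --- and it delivers directly the norm non-equivalence assertion that is also part of Theorem 1.3 (iii).

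Three fixable points you should address. First, the existence of a \emph{compactly supported} $g_0\in L^1\setminus h^1$ is an ingredient beyond Lemma 4.3 as stated; your $L\log L$-type example is correct (a nonnegative compactly supported function lies in $h^1$ only if it is locally in $L\log L$), and alternatively a soft argument works: if every $L^1$ function supported in a fixed ball belonged to $h^1$, the closed graph theorem together with translation invariance of the $h^1$ norm would give $h^1=L^1$, contradicting the properness of $h^1$ recorded in Section 4. Second, the construction of Proposition 4.4 uses specifically $r_3'g_0\notin L^1$, so you should note that a permutation or rotation of coordinates reduces your situation to that one. Third, a priori $\partial_{j_0}f_{k_0}$ is only a distribution, so the statement $\int_U|\partial_{j_0}f_{k_0}|\,dx=\infty$ should be phrased as: the restriction of $\partial_{j_0}f_{k_0}$ to $U$ is not (represented by) a function in $L^1(U)$; the gluing argument --- a distribution which is an integrable function on $U$ and on the complement of a slightly smaller closed set is an $L^1$ function --- then yields exactly the localization you need, and the rest of your cutoff argument goes through unchanged.
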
                                                  

\begin{proof}                                             
        (I) We are going to show the norms $\|f\|_{S,1,1,\Omega}$
     of $[H_0^{1,1}(\Omega)]^4$ and $\|f\|_{D,1,1,\Omega}$ of ${\mathbb H}_{0}^{1,1}(\Omega)$
     are not equivalent on $[C_0^{\infty}(\Omega)]^4$ (see (\ref{1SOB}) and (\ref{1S1p}) for
     the definition of these norms). To this end we use Proposition 4.4. Without loss of
     generality, we may assume that $\Omega$ contains the unit ball $\{x\,:\, |x| \leq 1\}$
     with center at the origin. As in (\ref{1pOnorm}) (with $p = 1$), we denote the norm of
     $[L^1(\Omega)]^4$ by $\|\mbf\|_{1,\Omega}$, {\it i.e.,}
\begin{equation*}
         \|\mbf\|_{1,\Omega} = \int_{\Omega} \, \sum_{j=1}^4 |f_j(x)| \, dx
                                   \qquad (\mbf(x) = {}^t(f_1(x), f_2(x), f_3(x), f_4(x)).
\end{equation*}
     By Proposition 4.4 and the fact that $[C_0^{\infty}({\mbbR}^3)]^4$ is dense in
     both $[H^{1,1}(\mbbR^3)]^4$ and $\mathbb{H}^{1,1}(\mbbR^3)$ (Theorem 1.3, (i)), the
     norms $\|{\mbf}\|_{S,1,1,\mbbR^3}$ and $\|{\mbf}\|_{D,1,1,\mbbR^3}$ are not equivalent
     on $[C_0^{\infty}({\mbbR}^3)]^4$. Therefore, by taking note of Proposition 2.2,
     (\ref{2est}) with $\Omega = \mbR^3$, which says that the norm $\|{\mbf}\|_{D,1,1,\mbbR^3}$
     is dominated by the norm $\|{\mbf}\|_{S,1,1,\mbbR^3}$,
     there exists a sequence $\{{\mbf}_n\}_{n=1}^{\infty}$ of functions in
     $[C_0^{\infty}({\mbbR}^3)]^4$ such that $\mbf_n \ne 0$ and
     $\|{\mbf_n}\|_{S,1,1,\mbbR^3} \ge (n + 1)\|{\mbf_n}\|_{D,1,1,\mbbR^3}$\,, or
\begin{equation}\label{4ineqr3}                                                      
        \|\mbf_n\|_{1,\mbbR^3} + \|\nabla\mbf_n\|_{1,\mbbR^3}
                \ge (n + 1)[\|\mbf_n\|_{1,\mbbR^3} + \|(\alpha\cdot{\pp})\mbf_n\|_{1,\mbbR^3}]
\end{equation}
     for each $n = 1, 2, \cdots$. Each $\mbf_n$ has support in some ball $\{ x \,:\, |x|\le R_n\}$
     with radius $R_n > 0$ and center at the origin. We may assume with no loss of generality that
     $R_n \ge 1$ ($n = 1, 2, \cdots$). Put $\mbg_n(x) = R_n^3\mbf_n(R_nx)$. Then $\mbg_n$ has support
     in the unit ball $\{ x \,:\, |x|\le 1 \}$, and hence in $\Omega$, so that
     $\{\mbg_n\} \subset [C_0^{\infty}(\Omega)]^4$ for each $n$. We have
$$
       \|\mbg_n\|_{1,\Omega} = \|\mbf_n\|_{1,\mbbR^3} \ \ \ {\rm and} \ \ \
          \|\partial_j \mbg_n\|_{1,\Omega}
             = R_n\|\partial_j\mbf_n\|_{1,\mbbR^3}
$$
     for $j = 1, 2, 3$. Then by (\ref{4ineqr3}) we have
$$
       \|\mbg_n\|_{1,\Omega} + \frac1{R_n}\|\nabla\mbg_n\|_{1,\Omega}
          \ge (n + 1)[ \|\mbg_n\|_{1,\Omega} +
                       \frac1{R_n}\|(\alpha\cdot \pp)\mbg_n\|_{1,\Omega}],
$$
     and hence, by noting $R_n \ge 1$
\begin{multline*}
    \hskip 50pt  \frac1{R_n}\|\nabla\mbg_n\|_{1,\Omega}
          \ge n\|\mbg_n\|_{1,\Omega}
                     + \frac{n+1}{R_n}\|(\alpha\cdot \pp)\mbg_n\|_{1,\Omega} \\
          \ge \frac{n}{R_n} [\|\mbg_n\|_{1,\Omega}
                         + \|(\alpha\cdot \pp)\mbg_n\|_{1,\Omega}\|]. \hskip 126pt
\end{multline*}
     Therefore
$$
           \|\nabla\mbg_n\|_{1,\Omega}
             \ge n[\|\mbg_n\|_{1,\Omega} + \|(\alpha\cdot \pp)\mbg_n\|_{1,\Omega}],
$$
     which implies that $\|\mbg_n\|_{S,1,1,\Omega} \ge n\|\mbg_n\|_{D,1,1,\Omega}$
     for $n = 1, 2, \cdots$. This proves that the norms $\|f\|_{S,1,1,\Omega}$ and
     $\|f\|_{D,1,1 \Omega}$ are not equivalent on $[C_0^{\infty}(\Omega)]^4$, showing (i).

        (II) As we have seen in the proof of (i), the norms of $[H_0^{1,1}(\Omega)]^4$
     and ${\mathbb H}_0^{1,1}(\Omega)$ are not equivalent. In fact, there exists a sequence
     $\{g_n\} \subset [C_0^{\infty}(\Omega)]^4$ such that
\begin{equation*}
  \|g_n\|_{S,1,1,\Omega} \geq n \|g_n\|_{D,1,1,\Omega}, \,\, n=1,2,\, \dots.
\end{equation*}
     Since clearly this sequence is also contained in $[H^{1,1}(\Omega)]^4$,
     this implies that the norms of $[H^{1,1}(\Omega)]^4$ and ${\mathbb H}^{1,1} (\Omega)$ are
     not equivalent. This shows (ii), completing the proof of Proposition 4.5.
\end{proof}                                         

\begin{proof} [Proof of Theorem 1.3, (iii)]
        Theorem 1.3, (iii) follows from Propositions 4.5.
\end{proof}

\bigskip\bigskip
        {\bf Acknowledgement.}\ 
Being deeply grieved that our respected friend, Professor Tetsuro Miyakawa,
suddenly passed away in February 2009, 
we gratefully remember nice useful discussion with him
about the Riesz transform at the early stage of the present work.
Thanks are also due to Professor Shuichi Sato for valuable
 discussion about the Hardy space and local Hardy space. 
 The research of T.I. is supported in part by JSPS Grant-in-Aid
   for Sientific Research No. 17654030.
We wish to thank the referee for his kindly pointing out
a flaw in an assertion in Theorem 1.3(ii) of our original version of 
the paper.

\vskip 30pt


\end{document}